\theoremstyle{plain}
\newtheorem{thm}{Theorem}[section]
\newtheorem{cor}[thm]{Corollary}
\newtheorem{lem}[thm]{Lemma}
\newtheorem{Example}{Example}[section]
\newtheorem{note}{Note}[section]
\theoremstyle{definition}
\newtheorem{defn}{Definition}[section]
\newtheorem{rem}{Remark}[section]
\begin{document}

\setcounter {page}{1}
\title{On strong $\mathcal{A}^\mathcal{I}$-statistical convergence of sequences in probabilistic metric spaces}
\author[ P. Malik and S. Das ]{Prasanta Malik* and Samiran Das*\ }
\newcommand{\acr}{\newline\indent}
\maketitle
\address{{*\,} Department of Mathematics, The University of Burdwan, Golapbag, Burdwan-713104,
West Bengal, India.
                Email: pmjupm@yahoo.co.in, das91samiran@gmail.com \acr
           }

\maketitle
\begin{abstract}
In this paper using a non-negative regular summability matrix $\mathcal{A}$ and a non-trivial admissible ideal $\mathcal{I}$ in $\mathbb{N}$ we study some basic properties of strong $\mathcal{A}^\mathcal{I}$-statistical convergence and strong $\mathcal{A}^\mathcal{I}$-statistical Cauchyness of sequences in probabilistic metric spaces not done earlier. We also introduce strong $\mathcal{A}^{\mathcal{I}^*}$-statistical Cauchyness in probabilistic metric space and study its relationship with strong $\mathcal{A}^{\mathcal{I}}$-statistical Cauchyness there. Further, we study some basic properties of strong $\mathcal{A}^\mathcal{I}$-statistical limit points and strong $\mathcal{A}^\mathcal{I}$-statistical cluster points of a sequence in probabilistic metric spaces. 
\end{abstract}

\author{}
\maketitle { Key words and phrases: probabilistic metric space, $\mathcal{A}^\mathcal{I}$-density, strong $\mathcal{A}^\mathcal{I}$-statistical convergence, strong $\mathcal{A}^\mathcal{I}$-statistical Cauchyness, strong $\mathcal{A}^{\mathcal{I}^*}$-statistical Cauchyness, strong $\mathcal{A}^\mathcal{I}$-statistical limit point, strong $\mathcal{A}^\mathcal{I}$-statistical cluster point.} \\

\textbf {AMS subject classification (2010) : 54E70, 40A99, 40C05}.  \\

\section{\textbf{Introduction:}}\label{sec1}

The notion of ``statistical metric space" now known as Probabilistic metric space (in short PM space) was introduced by Menger \cite{Me} in 1942, as an important generalization of metric spaces. In PM space he used the distance between two points $u$ and $v$ as a distribution function $F_{uv}$ instead of a non-negative real number. 
After Menger, several mathematicians like Schwiezer and Sklar \cite{Sh1, Sh2, Sh3, Sh4}, Tardiff \cite{Tar}, Thorp \cite{Th} and many others, contributed a lot in the study of probabilistic metric spaces. A through discussion on the development of probabilistic metric spaces can be found in the well known book of Schwiezer and Sklar \cite{Sh5}. Many topologies are constructed on a PM space, but the strong topology is one, getting most of the importance to date and we are using it in this paper.

On the otherhand, the idea of statistical convergence was introduced as a generalization to the usual notion of convergence of real number sequences independently by Fast \cite{Fa} and Schoenberg \cite{Sc}, using the concept of natural density of subsets of $\mathbb{N}$, the set of all natural numbers. A set $\mathcal{K}\subset\mathbb{N}$ has natural density 
$d(\mathcal{K})$ if
$$d(\mathcal{K})=\lim\limits_{n\rightarrow \infty}\frac{\left|\mathcal{K}(n)\right|}{n}$$
where
$\mathcal{K}(n)=\left\{j\in \mathcal{K}:j\leq n\right\}$ and
 $\left|\mathcal{K}(n)\right|$ represents the number of elements in $\mathcal{K}(n)$.

A real number sequence $x=\{x_k\}_{k\in\mathbb{N}}$ is called statistically convergent to $\mathcal{L}\in\mathbb{R}$ if for every $\varepsilon>0,~d(B(\varepsilon))=0$ or $\lim\limits_{n\rightarrow\infty}(C_1\chi_{_{B(\varepsilon)}})_n=0$ where $B(\varepsilon)=\{k\in\mathbb{N}:\left|x_k-\mathcal{L}\right|\geq\varepsilon\}$ and $C_1$ is the Cesaro matrix of order $1$.

After the seminal works of \v{S}al\'{a}t \cite{Sa} and Fridy \cite{Fr1}, many more works on this convergence notion have been done which can be seen in \cite{Fr2, Pe, St}.

In 1981, the concept of
natural density was generalized to the notion of $\mathcal A$-density by Freedman et al. \cite{Fd}, using an arbitrary non-negative regular
summability matrix $\mathcal A$ in place of the Cesaro matrix $C_1$. An $\mathbb{N} \times \mathbb{N}$
matrix $\mathcal A = (a_{nk}),~ a_{nk} \in \mathbb{R}$ is called a regular summability matrix if for any convergent sequence $x=\{x_k\}_{k\in\mathbb N}$ of real numbers with limit $\xi$, $
\displaystyle{\lim_{n \rightarrow \infty}} \displaystyle{\sum_{ k
= 1}^{\infty}} a_{nk}x_k = \xi$, and $\mathcal A$ is called
non-negative if $ a_{nk} \geq 0, ~\forall n, k $. The well-known
Silverman-Toepliz's theorem asserts that an $\mathbb{N} \times
\mathbb{N}$ matrix $\mathcal A = (a_{nk}),~ a_{nk} \in \mathbb{R}$
is regular if and only if the following three conditions are
satisfied:

\begin{enumerate}[(i)]
\item
$\left\|\mathcal{A}\right\|=\sup\limits_{n}\sum\limits_{k}|a_{nk}|<\infty$,
\item $\lim\limits_{n\rightarrow\infty}a_{nk}=0$ for each $k$,
\item $\lim\limits_{n\rightarrow\infty}\sum\limits_{k}a_{nk}=1$.
\end{enumerate}

Throughout the paper we take $\mathcal A = (a_{nk})$ as an
$\mathbb{N} \times \mathbb{N}$ non-negative regular summability
matrix unless or otherwise mentioned.

A set $\mathcal{M}\subset\mathbb{N}$ is said to have $\mathcal{A}$ density $\delta_{\mathcal{A}}(\mathcal{M})$ if
 $$\delta_{\mathcal{A}}(\mathcal{M})=\lim\limits_{n\rightarrow \infty}\sum\limits_{m\in\mathcal{M}}a_{nm}.$$

The notion of statistical convergence was extended to the notion of $\mathcal{A}$-statistical convergence by Kolk \cite{Kl1}, using the notion of $\mathcal{A}$-density.

A sequence $x=\{x_k\}_{k\in\mathbb{N}}$ of real numbers is said to be $\mathcal{A}$-statistically convergent to $L\in \mathbb{R}$ if for every $\varepsilon>0$ we have
$$\delta_{\mathcal{A}}(\{k\in\mathbb{N}:\left|x_k-L\right|\geq\varepsilon\})=0,~\text{or},~
\lim\limits_{n\rightarrow\infty}\sum\limits_{\left|x_k-L\right|\geq\varepsilon}a_{nk}=0.$$
In this case we write $st_{\mathcal{A}}\mbox{-}\lim\limits_{k\rightarrow \infty}x_k = L$.
 The notion of $\mathcal{A}$-statistical limit point and $\mathcal{A}$-statistical cluster point were introduced by Connor et al. in \cite{Co4} and these notions were extensively studied in \cite{De1, De3}.

Further, the idea of statistical convergence was extended to $\mathcal{I}$-convergence by Kostyrko et al. \cite{Ko1} using the notion of an ideal $\mathcal{I}$ of subsets of $\mathbb{N}$. A non-empty class $\mathcal{I}\subset 2^X$, where $X\neq\emptyset$, is called an ideal, if the following three conditions are satisfied:
\begin{eqnarray*}
&(i)& \emptyset \in \mathcal{I};\\ 
&(ii)& A, B \in \mathcal{I} \Rightarrow A\cup B \in \mathcal{I};\\  
&(iii)& A\in \mathcal{I}, B\subset A \Rightarrow B\in \mathcal{I}.
\end{eqnarray*}
 An ideal $\mathcal{I}$ in $X$ is called non-trivial if $\mathcal{I}\neq\{\emptyset\}$ and $X\notin \mathcal{I}$. A non-trivial ideal $\mathcal{I}$ of $X$ is called admissible if for every $y\in X$, $\{y\}\in\mathcal{I}$. For a non-trivial ideal $\mathcal{I}$ in $X$ the filter associated with the ideal $\mathcal{I}$ is denoted by $\mathcal{F}(\mathcal{I})$ and is defined by $\mathcal{F}(\mathcal{I})=\{K\subset X:~\text{if there exists}~ H\in\mathcal{I}~\text{such that}~ X\setminus H=K\}$.

Throughout the paper $\mathcal{I}$ stands for a non-trivial admissible ideal in $\mathbb{N}$.
A real number sequence $x=\{x_k\}_{k \in \mathbb{N}}$ is said to be
$\mathcal{I}$-convergent to $\mathcal{L}\in\mathbb{R}$, if for every $\varepsilon >0$, $\{
k\in\mathbb{N} : \left| x_k - \mathcal{L} \right| \geq \varepsilon\} \in
\mathcal{I}$ and we write it as $\mathcal{I}\mbox{-}\lim\limits_{k\rightarrow \infty}x_k = \mathcal{L}$.

 Many more works on this line can be seen in \cite{Ko2, La1, La2}.

Recently the notion of $\mathcal{A}$-statistical convergence of real sequences was generalized to the notion of $\mathcal{A}^\mathcal{I}$-statistical convergence by Savas et al. \cite{Sav3} by using an ideal $\mathcal{I}$ in $\mathbb{N}$. The concept of $\mathcal{A}^\mathcal{I}$-statistical cluster point was introduced by G$\ddot{\text{u}}$rdal et al. \cite{Gu1}. 
 A set $\mathcal{M}\subset\mathbb{N}$ is said to have $\mathcal{A}^\mathcal{I}$-density $\delta_{\mathcal{A}^\mathcal{I}}(\mathcal{M})$ if
$$\delta_{\mathcal{A}^\mathcal{I}}(\mathcal{M})=\mathcal{I}\mbox{-}\lim\limits_{n\rightarrow \infty}(\mathcal{A}\chi_{\mathcal{M}})_n=\mathcal{I}\mbox{-}\lim\limits_{n\rightarrow\infty}\sum\limits_{m\in\mathcal{M}}a_{nm}.$$

From Lemma 2.4 of \cite{Pr4}, we have for $\mathcal{K}_1,\mathcal{K}_2\subset \mathbb{N}$ if $\delta_{\mathcal{A}^\mathcal{I}}(\mathcal{K}_1)$ and
$\delta_{\mathcal{A}^\mathcal{I}}(\mathcal{K})_2$ exist, then
\begin{enumerate}[(i)]
\item $\delta_{\mathcal{A}^\mathcal{I}}(\emptyset)=1$,
$\delta_{\mathcal{A}^\mathcal{I}}(\mathbb{N})=1$ and  $0\leq
\delta_{\mathcal{A}^\mathcal{I}}(\mathcal{K}_i)\leq 1$ for $i =
1,2$, 
\item $\left|\mathcal{K}_1\Delta\mathcal{K}_2\right|<\infty
\Rightarrow
\delta_{\mathcal{A}^\mathcal{I}}(\mathcal{K}_1)=\delta_{\mathcal{A}^\mathcal{I}}(\mathcal{K}_2)$,
\item $\mathcal{K}_1\cap\mathcal{K}_2=\emptyset \Rightarrow
\delta_{\mathcal{A}^\mathcal{I}}(\mathcal{K}_1)+\delta_{\mathcal{A}^\mathcal{I}}(\mathcal{K}_2)=
\delta_{\mathcal{A}^\mathcal{I}}(\mathcal{K}_1\cup\mathcal{K}_2)$,
\item
$\delta_{\mathcal{A}^\mathcal{I}}(\mathcal{K}_i^c)=1-\delta_{\mathcal{A}^\mathcal{I}}(\mathcal{K}_i)$
for $i = 1, 2$, 
\item
$\delta_{\mathcal{A}^\mathcal{I}}(\mathcal{K}_i)=0$ for $i=1,2
\Rightarrow
\delta_{\mathcal{A}^\mathcal{I}}\left(\bigcup\limits_{i=1}^2\mathcal{K}_i\right)=0$,
\item $\delta_{\mathcal{A}^\mathcal{I}}(\mathcal{K}_i)=1$ for
$i=1,2 \Rightarrow
\delta_{\mathcal{A}^\mathcal{I}}(\mathcal{K}_1\cap\mathcal{K}_2)=1,
\delta_{\mathcal{A}^\mathcal{I}}(\mathcal{K}_1\cup\mathcal{K}_2)=1$.
\end{enumerate}

If a real number sequence $x=\{x_k\}_{k\in \mathbb{N}}$ satisfies a property $\mathfrak{Q}$ for each $k$ except for a set of $\mathcal{A}^\mathcal{I}$-density zero, then we say $x$ satisfies the property $\mathfrak{Q}$ for ``almost all $k(\mathcal{A}^\mathcal{I})$'' and we write it in short as ``$a.a.k(\mathcal{A}^\mathcal{I})$''.

For an admissible ideal $\mathcal{I}$ of $\mathbb{N}$, the collection $\mathcal{J}({\mathcal{A}^\mathcal{I}}) = \{ B \subset \mathbb{N}:
\delta_{\mathcal{A}^\mathcal{I}} (B) = 0\}$, where $\mathcal A = (a_{nk})$ is
an $\mathbb{N} \times \mathbb{N}$ non-negative regular summability
matrix, forms an admissible ideal of $\mathbb{N}$ again.

A real number sequence $x=\{x_k\}_{k \in \mathbb{N}}$ is said to be $\mathcal{A}^\mathcal{I}$-statistically convergent to $\xi$
if for any $\varepsilon > 0$,
$$\delta_{\mathcal{A}^\mathcal{I}}(\{k\in\mathbb{N}:\left|x_k-\xi\right|\geq\varepsilon\})=0.$$
In this case we write $\mathcal{I}\mbox{-}st_{\mathcal{A}}\mbox{-}\lim\limits_{k\rightarrow \infty}x_k = \xi $ or simply as $x_k\xrightarrow{\mathcal{A}^\mathcal{I}\mbox{-}st}\xi$.
Many more works on this line can be seen in \cite{Ed1, Ed2, Pr4, Sav6}. 

A real number sequence $x=\{x_k\}_{k\in\mathbb{N}}$ is said to be $\mathcal{A}^\mathcal{I}$-statistically Cauchy if for every $\gamma>0$, there exists a natural number $m_0$ such that
$$\delta_{\mathcal{A}^\mathcal{I}}(\{k\in\mathbb{N}:\left|x_k-x_{m_0}\right|\geq\gamma\})=0.$$

Because of immense importance of probabilistic metric space in
applied mathematics, the notions of statistical convergence \cite
{Fa, Sc} and $\mathcal{I}$-convergence \cite{Ko1} were
extended to the setting of sequences in a PM space endowed with
the strong topology by \c{S}en\c{c}imen et al. in \cite{Se} and
\cite{Se2} respectively. Many more works on this line can be seen in \cite{Ag1, Ag2, Da5, Pr1}. The notion of strong $\mathcal{A}$-statistical convergence was studied by Malik et al. \cite{Pr3} in PM spaces. 
 It can be easily seen that the set of all $\mathcal{A}^\mathcal{I}$-density zero subsets of $\mathbb{N}$ forms an ideal $\mathcal{J}(\mathcal{A}^\mathcal{I})$ in $\mathbb{N}$. So the notions of strong $\mathcal{A}^\mathcal{I}$-statistical convergence and strong $\mathcal{A}^\mathcal{I}$-statistical Cauchyness are special cases of strong $\mathcal{I}$-convergence and strong $\mathcal{I}$-Cauchyness respectively in a PM space. Following Kostyrko et al. \cite{Ko1},
Bartoszewicz et al. \cite{Br} and \c{S}en\c{c}imen et al
\cite{Se2}, for an admissible ideal $\mathcal{I}$ of $\mathbb{N}$, if we consider the admissible ideal $\mathcal{J}({\mathcal{A}^\mathcal{I}})$, then the notions of strong $\mathcal{J}(\mathcal{A}^\mathcal{I})$-convergence, strong $\mathcal{J}(\mathcal{A}^\mathcal{I})$-Cauchyness,
strong $\mathcal{J}(\mathcal{A}^\mathcal{I})$-limit point and strong
$\mathcal{J}(\mathcal{A}^\mathcal{I})$-cluster point of sequences in a PM
space become the notions of strong $\mathcal{A}^\mathcal{I}$-statistical
convergence, strong $\mathcal{A}^\mathcal{I}$-statistical Cauchyness, strong
$\mathcal{A}^\mathcal{I}$-statistical limit point and strong $\mathcal{A}^\mathcal{I}$-statistical cluster point respectively.

In this paper we study some basic properties of strong $\mathcal{A}^\mathcal{I}$-statistical convergence, strong $\mathcal{A}^\mathcal{I}$-statistical Cauchyness, strong $\mathcal{A}^\mathcal{I}$-statistical limit points and strong $\mathcal{A}^\mathcal{I}$-statistical cluster points of a sequence in a probabilistic metric space not done earlier. We also introduce the notion of strong $\mathcal{A}^{\mathcal{I}^*}$-statistical Cauchyness and study its relationship with strong $\mathcal{A}^{\mathcal{I}}$-statistical Cauchyness.

\section{\textbf{Basic Definitions and Notations}}\label{sec2}
In this section, we first recall some basic concepts and results related to probabilistic metric (PM) spaces (for more details see in \cite{Sh1, Sh2, Sh3, Sh4, Sh5}).
\begin{defn}\cite{Sh5}\label{def21}
A monotonically non decreasing function $f: [-\infty, \infty] \rightarrow [0,1]$ is called a distribution function if $f(-\infty)=0$ and $ f(\infty)=1$.
\end{defn}
We denote the set of all distribution functions with left continuous over $(-\infty,\infty)$ by $\mathcal{D}$. The relation $\leq$ on $\mathcal{D}$ defined by $f \leq g$ if and only if $f(a)\leq g(a),~\forall~ a \in [-\infty, \infty]$ is clearly a partial order relation on $\mathcal{D}$.

If $b\in [-\infty,+\infty]$, then the unit step at $b$ is defined on $\mathcal{D}$ by 
\[ \varepsilon_{b}(a) = \left\{
  \begin{array}{l l}
    0, & \quad a\in [-\infty,b)\\
    1, & \quad a\in (b,+\infty].
  \end{array} \right.\]

\begin{defn}\cite{Sh5}\label{def22}
A sequence $\{f_k\}_{k\in \mathbb{N}}$ in $\mathcal{D}$ is said to converge weakly to $f\in\mathcal{D}$ written as $f_k\xrightarrow{w}f$, if $\{f_k(\xi)\}_{k\in\mathbb{N}}$ converges to $f(\xi)$ at each continuity point $\xi$ of $f$.
\end{defn}

If $f,g\in \mathcal{D}$, then the distance $d_{L}(f,g)$ between $f$ and $g$ is defined by the infimum of all numbers $a\in(0,1]$ such that
\begin{eqnarray*}
&&~~~f(\xi-a)-a\leq g(\xi)\leq f(\xi+a)+a \\
&\text{and} &~g(\xi-a)-a\leq f(\xi)\leq g(\xi+a)+a,\hspace{0.1in}\text{holds}~\forall\xi\in\left(-\frac{1}{a},\frac{1}{a}\right).
\end{eqnarray*}

Then $(\mathcal{D},d_L)$ forms a metric space with the metric $d_{L}$. Clearly if $\{f_k\}_{k\in\mathbb{N}}$ is a sequence in $\mathcal{D}$ and
$f\in\mathcal{D}$, then $f_k\xrightarrow{w}f ~\text{if and only if}~ d_{L}(f_k,f)\rightarrow 0$.

\begin{defn}\cite{Sh5}\label{def23}
A non decreasing function $f:[0,\infty]\longrightarrow\mathbb{R}$, left continuous on $(0,\infty)$ is said to be a distance distribution function if $f(0)=0$ and $ f(\infty)=1$.
\end{defn}
We denote the set consisting of all the distance distribution functions as $\mathcal{D}^{+}$. Clearly $(\mathcal{D}^{+},d_L)$ is a compact metric space and thus complete.

\begin{thm}\cite{Sh5}\label{thm21}
If $f\in\mathcal{D}^{+}$ then for any $t>0$, $f(t)>1-t$ if and only if $d_{L}(f,\varepsilon_{0})<t$.
\end{thm}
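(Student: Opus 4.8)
The plan is to unwind the definition of the Lévy-type metric $d_{L}$ and reduce, for each fixed parameter, the band inequalities defining $d_{L}(f,\varepsilon_{0})<t$ to a single monotone inequality; once that reduction is in place, both implications fall out from the monotonicity and left-continuity of $f$. First I would record the elementary facts I intend to use: since $f\in\mathcal{D}^{+}$ has $f(0)=0$, $f(\infty)=1$ and is non-decreasing, we have $f(\xi)=0$ for all $\xi\le 0$ and $0\le f\le 1$ throughout; also $\varepsilon_{0}(\xi)=0$ for $\xi\le 0$ and $\varepsilon_{0}(\xi)=1$ for $\xi>0$. By the definition of $d_{L}$, the statement $d_{L}(f,\varepsilon_{0})<t$ means precisely that there is some $a\in(0,1]$ with $a<t$ for which all four band inequalities hold for every $\xi\in(-1/a,1/a)$.

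The crucial step is to show that, for such a fixed $a$, these inequalities are equivalent to the single condition
\[
f(s)\ge 1-a\qquad\text{for all }s>a.
\]
To establish this I would split on the sign of $\xi$. For $\xi\le 0$ every inequality is automatic, using $f\equiv 0$ on $(-\infty,0]$, the bound $0\le f\le 1$, and $\varepsilon_{0}(\xi)=0$. For $\xi>0$ the only surviving requirements are $1=\varepsilon_{0}(\xi)\le f(\xi+a)+a$ and, from the symmetric pair, $\varepsilon_{0}(\xi-a)-a\le f(\xi)$ in the subcase $\xi>a$; both amount to $f(s)\ge 1-a$ for the relevant $s>a$. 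Since $f$ is non-decreasing, the binding constraint sits at $s\to a^{+}$, so the range restriction $\xi\in(-1/a,1/a)$ is harmless (note $1/a>1$ because we may take $a<1$), and the requirement for $s$ in the truncated range extends to all $s>a$. This gives the displayed characterization.

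With this in hand the two implications are short. For the forward direction, if $d_{L}(f,\varepsilon_{0})<t$ then some $a<t$ satisfies $f(s)\ge 1-a$ for all $s>a$; taking $s=t>a$ yields $f(t)\ge 1-a>1-t$. For the converse, assume $f(t)>1-t$, i.e. $f(t)+t>1$. Here I would invoke the left-continuity of $f$ at $t$: the map $a\mapsto f(a)+a$ is left-continuous with value $>1$ at $t$, so there is $a<t$ (hence $a\in(0,1]$ when $t\le 1$) with $f(a)+a>1$, that is $f(a)\ge 1-a$; monotonicity then gives $f(s)\ge f(a)\ge 1-a$ for every $s>a$, so the condition holds and $d_{L}(f,\varepsilon_{0})\le a<t$. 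The range $t>1$ is disposed of separately and trivially, since there $d_{L}(f,\varepsilon_{0})\le 1<t$ (the parameter $a=1$ always satisfies the condition) while $f(t)\ge 0>1-t$, so both sides hold.

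The main obstacle I anticipate is the reduction step rather than the final implications: one must check all four band inequalities against the explicit values of $\varepsilon_{0}$ together with $0\le f\le 1$, and correctly isolate $f(s)\ge 1-a$ $(s>a)$ as the lone effective constraint, taking care that the infimum over $\xi$ pins it at $s\to a^{+}$. The second delicate point is recognizing that it is the left-continuity of $f$ (not merely its monotonicity) that powers the ``if'' direction, since the hypothesis controls $f(t)$ whereas the witness must be a parameter $a$ strictly below $t$.
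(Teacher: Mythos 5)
Your proposal cannot be checked against an in-paper argument, because the paper does not prove this statement: Theorem~\ref{thm21} is imported as a background fact from Schweizer and Sklar \cite{Sh5} and stated without proof. Judged on its own merits, your reconstruction is correct and complete. The key reduction --- that for a fixed admissible parameter $a\in(0,1]$ the four band inequalities defining the L\'{e}vy distance to $\varepsilon_0$ collapse to the single condition $f(s)\ge 1-a$ for all $s>a$ --- is carried out properly: the region $\xi\le 0$ is vacuous since $f$ vanishes on $(-\infty,0]$ and $\varepsilon_0(\xi)=0$ there; for $\xi>0$ the bounds $0\le f\le 1$ make two of the four inequalities automatic, and the surviving two force $f(s)\ge 1-a$ on the window $(a,\,a+1/a)$, which monotonicity propagates to all $s>a$. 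The forward implication then reads off $f(t)\ge 1-a>1-t$ from a witness $a<t$, and the converse correctly splits into $t\le 1$, where left-continuity of $f$ at $t$ yields a witness $a\in(0,t)$ with $f(a)+a>1$, and the trivial range $t>1$, where $d_L(f,\varepsilon_0)\le 1<t$ and $1-t<0\le f(t)$. You are also right that left-continuity, not mere monotonicity, is the indispensable ingredient in the ``if'' direction: a distribution function jumping exactly at $t$ would satisfy $f(t)>1-t$ while $d_L(f,\varepsilon_0)=t$, not $<t$. Two harmless quibbles: when $a=1$ your parenthetical claim $1/a>1$ fails (though the window $(a,\,a+1/a)$ is still nonempty, so nothing breaks), and you tacitly use $\varepsilon_0(0)=0$, a value the paper's definition of the unit step leaves unspecified at the jump but which is forced by $\varepsilon_0\in\mathcal{D}^{+}$.
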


\begin{defn}\cite{Sh5}\label{def24}
A triangle function is a binary operation $\tau$ on $\mathcal{D}^{+}$, which is associative, commutative, nondecreasing in each place and $\varepsilon_{0}$ is the identity element.
\end{defn}

\begin{defn}\cite{Sh5}\label{def25}
A probabilistic metric space, in short PM space, is a triplet $(X,\mathcal{F},\tau)$ where $X$ is a nonempty set whose elements are the points of the space; $\mathcal{F}$ is a function from $X\times X$ into $\mathcal{D}^{+}$, $\tau$ is a triangle function and the following conditions are satisfied for all $a,b,c\in X$:
    \begin{eqnarray*}
    &\textbf{(P\mbox{-}1).}&\mathcal{F}(a,a)=\varepsilon_{0},\\
    &\textbf{(P\mbox{-}2).}&\mathcal{F}(a,b)\neq\varepsilon_{0} ~\text{if}~ a\neq b,\\
    &\textbf{(P\mbox{-}3).}&\mathcal{F}(a,b)=\mathcal{F}(b,a),\\
    &\textbf{(P\mbox{-}4).}&\mathcal{F}(a,c)\geq \tau(\mathcal{F}(a,b),\mathcal{F}(b,c)).
    \end{eqnarray*}
  
Henceforth we will denote $\mathcal{F}(a,b)$ by $\mathcal{F}_{ab}$ and its value at $t$ by $\mathcal{F}_{ab}(t)$.
\end{defn}

\begin{Example}\cite{Sh5}\label{exa21}
Let $F\in\mathcal{D}^+$ is different from $\varepsilon_0$ and $\varepsilon_\infty$. Then $(X,\mathcal{F},M)$ is a equilateral PM space where $\mathcal{F}_{uv}$ is given by
\[\mathcal{F}_{uv}=\left\{
\begin{array}{l l}
F, & \quad\text{if}~ u\neq v\\
\varepsilon_0, & \quad \text{if}~ u=v
\end{array} \right.\]
and $M$ is the maximal triangle function.
\end{Example}

\begin{defn}\cite{Sh5}\label{def26}
Let $(X,\mathcal{F},\tau)$ be a PM space. For $\xi\in X$ and $t>0$, the strong $t$-neighborhood of $\xi$ is denoted by $\mathcal{N}_{\xi}(t)$ and is defined by
\begin{center}
$\mathcal{N}_{\xi}(t)=\{\eta\in X : \mathcal{F}_{\xi\eta}(t)>1-t\} $.
\end{center}
The collection $\mathfrak{N}_{\xi}=\{\mathcal{N}_{\xi}(t):t>0 \}$ is called the strong neighborhood system at $\xi$ and the union $\mathfrak{N}=\bigcup\limits_{\xi\in X}\mathfrak{N}_{\xi}$ is called the strong neighborhood system for $X$.
\end{defn}

From Theorem \ref{thm21}, we can write $\mathcal{N}_{a}(r)=\{b\in X:
d_{L}(\mathcal{F}_{ab},\varepsilon_{0})<r\} $. If $\tau$ is continuous, then the strong neighborhood system $\mathfrak{N}$ determines a Hausdorff topology for $X$. This topology is called the strong topology for $X$ and members of this topology are called strong open sets. Clearly, $\mathcal{N}_{\beta}(t)$ where $\beta\in X$, $t>0$ is a basic open set of this strong topology.

Throughout the paper, in a PM space $(X,\mathcal{F},\tau)$, we always consider that $\tau$ is continuous and $X$ is endowed with the strong topology.

In a PM space $(X,\mathcal{F},\tau)$ the strong closure of any subset $\mathcal{M}$ of $X$ is denoted by $k(\mathcal{M})$ and for any subset $\mathcal{M}(\neq\emptyset)$ of $X$ strong closure of $\mathcal{M}$ is defined by,
$$k(\mathcal{M})=\{c\in X: ~\text{for any}~ t>0, ~\exists~ e\in \mathcal{M} ~\text{such that}~\mathcal{F}_{ce}(t)>1-t\}.$$ 

\begin{defn}\cite{Du1}\label{def27}
Let $(X,\mathcal{F},\tau)$ be a PM space. Then a subset $\mathcal{M}$ of $X$ is called strongly closed if its complement is a strongly open set.
\end{defn}

\begin{defn}\cite{Pr1, Se}\label{def28}
Let $(X,\mathcal{F},\tau)$ be a PM space and $\mathcal{M}\neq \emptyset$ be a subset of $X$. Then $l\in X$ is said to be a strong limit point of $\mathcal{M}$ if for every $t>0$,
$$\mathcal{N}_l(t)\cap(\mathcal{M}\setminus\{l\})\neq\emptyset.$$
The set of all strong limit points of the set $\mathcal{M}$ is denoted by $L_\mathcal{M}^\mathcal{F}$.
\end{defn}

\begin{defn}\cite{Du1}\label{def29}
Let $(X,\mathcal{F},\tau)$ be a PM space and $\mathcal{M}$ be a subset of $X$. Let $\mathfrak{Q}$ be a family of strongly open subsets of $X$ such that $\mathfrak{Q}$ covers $\mathcal{M}$. Then $\mathfrak{Q}$ is said to be a strong open cover for $\mathcal{M}$.
\end{defn}

\begin{defn}\cite{Du1}\label{def210}
Let $(X,\mathcal{F},\tau)$ be a PM space and $\mathcal{M}$ be a subset of $X$. Then $\mathcal{M}$ is called strongly compact set if for every strong open cover of $\mathcal{M}$ has a finite subcover.
\end{defn}

\begin{defn}\cite{Du1}\label{def211}
Let $(X,\mathcal{F},\tau)$ be a PM space and $x=\{x_k\}_{k\in\mathbb{N}}$ be a sequence in $X$. Then $x$ is said to be strongly bounded if there exists a strongly compact subset $\mathcal{C}$ of $X$ such that $x_k\in \mathcal{C}$, $\forall ~k\in\mathbb{N}$. 
\end{defn}

\begin{defn}\cite{Du1}\label{def212}
Let $(X,\mathcal{F},\tau)$ be a PM space and $x=\{x_k\}_{k\in\mathbb{N}}$ be a sequence in $X$. Then $x$ is said to be strongly statistically bounded if there exists a strongly compact subset $\mathcal{C}$ of $X$ such that $d(\{k\in\mathbb{N}:x_k\notin \mathcal{C}\})=0$. 
\end{defn}

\begin{thm}\cite{Du1}\label{thm22} 
Let $(X,\mathcal{F},\tau)$ be a PM space and $\mathcal{M}$ be a strongly compact subset of $X$. Then every strongly closed subset of $\mathcal{M}$ is strongly compact.
\end{thm}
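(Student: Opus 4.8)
The plan is to adapt the classical topological argument that a closed subset of a compact space is compact, carrying it out entirely within the language of the strong topology, so that only Definitions \ref{def27}, \ref{def29}, \ref{def210} and the elementary set-theoretic bookkeeping are invoked. Let $\mathcal{C}$ be a strongly closed subset of $\mathcal{M}$, that is, $\mathcal{C}\subseteq\mathcal{M}$ and $X\setminus\mathcal{C}$ is strongly open by Definition \ref{def27}. The goal is to verify the finite-subcover property of Definition \ref{def210} for $\mathcal{C}$.

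First I would fix an arbitrary strong open cover $\mathfrak{Q}$ of $\mathcal{C}$, so $\mathfrak{Q}$ is a family of strongly open subsets of $X$ with $\mathcal{C}\subseteq\bigcup_{G\in\mathfrak{Q}}G$. The key construction is to enlarge $\mathfrak{Q}$ by adjoining the single strongly open set $X\setminus\mathcal{C}$; since every point of $X$ lies either in $\mathcal{C}$ (hence in some member of $\mathfrak{Q}$) or in $X\setminus\mathcal{C}$, the family $\mathfrak{Q}^{*}=\mathfrak{Q}\cup\{X\setminus\mathcal{C}\}$ is a strong open cover of all of $X$, and in particular a strong open cover of $\mathcal{M}$ in the sense of Definition \ref{def29}.

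Next I would apply the strong compactness of $\mathcal{M}$ to $\mathfrak{Q}^{*}$, obtaining a finite subfamily $\{G_{1},\dots,G_{n}\}\cup\{X\setminus\mathcal{C}\}$ of $\mathfrak{Q}^{*}$ that covers $\mathcal{M}$, where each $G_{i}\in\mathfrak{Q}$ (if $X\setminus\mathcal{C}$ does not occur in the finite subcover, the argument only simplifies). Since $\mathcal{C}\subseteq\mathcal{M}$, this finite collection also covers $\mathcal{C}$; and because no point of $\mathcal{C}$ lies in $X\setminus\mathcal{C}$, the set $X\setminus\mathcal{C}$ may be discarded without losing coverage of $\mathcal{C}$. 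Hence $\mathcal{C}\subseteq G_{1}\cup\cdots\cup G_{n}$ with each $G_{i}\in\mathfrak{Q}$, so $\{G_{1},\dots,G_{n}\}$ is a finite subcover of $\mathfrak{Q}$ for $\mathcal{C}$. As $\mathfrak{Q}$ was an arbitrary strong open cover, $\mathcal{C}$ satisfies Definition \ref{def210} and is therefore strongly compact.

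I do not anticipate any genuine obstacle here: the argument is a verbatim transcription of the standard proof, and the only point requiring care is the justification that $X\setminus\mathcal{C}$ is strongly open, which is exactly the hypothesis that $\mathcal{C}$ is strongly closed. The role of the continuity of $\tau$ and the Hausdorffness of the strong topology is only to guarantee that ``strongly open'' and ``strongly closed'' behave as a genuine topology, which has already been established in the discussion following Definition \ref{def26}.
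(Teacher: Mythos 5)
Your proof is correct: it is the standard argument that a strongly closed subset of a strongly compact set is strongly compact, carried out properly in the strong topology (adjoin the strongly open set $X\setminus\mathcal{C}$ to the given cover, extract a finite subcover of $\mathcal{M}$, then discard $X\setminus\mathcal{C}$). The paper itself states this theorem without proof, citing \cite{Du1}, and your argument is exactly the classical one that the cited source relies on, so there is nothing to reconcile.
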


\begin{defn}\cite{Sh5}\label{def213}
Let $(X,\mathcal{F},\tau)$ be a PM space. Then for any $u>0$, the subset $\mathcal{V}(u)$ of $ X\times X$ given by
\begin{center}
$\mathcal{V}(u)=\{(p,q):\mathcal{F}_{pq}(u)>1-u\} $
\end{center}
is called the strong $u$-vicinity.
\end{defn}

\begin{thm}\cite{Sh5}\label{thm23}
 Let $(X,\mathcal{F},\tau)$ be a PM space and $\tau $ be continuous. Then for any $u>0$, there is an $\alpha>0$ such that
 $\mathcal{V}(\alpha)\circ\mathcal{V}(\alpha)\subset
 \mathcal{V}(u)$, where $\mathcal{V}(\alpha)\circ\mathcal{V}(\alpha)=\{(p,r):$ ~\mbox{for some}~ $q$,~  $(p,q)$
 and $(q,r)\in \mathcal{V}(\alpha)\}$.
 \end{thm}

From the hypothesis of Theorem \ref{thm23}, we can say that for any $u>0$,
there is an $\alpha >0 $ such that $\mathcal{F}_{pr}(u)>1-u$
 whenever $\mathcal{F}_{pq}(\alpha)>1-\alpha $ and $\mathcal{F}_{qr}(\alpha)>1-\alpha$. Equivalently it
 can be written as: for any $u>0$, there is an $\alpha>0$ such that
 $d_{L}(\mathcal{F}_{pr},\varepsilon_{0})<u$ whenever $d_{L}(\mathcal{F}_{pq},\varepsilon_{0})<\alpha$
 and $ d_{L}(\mathcal{F}_{qr}, \varepsilon_{0})<\alpha$.

\begin{defn}\cite{Se}\label{def214}
Let $(X,\mathcal{F},\tau)$ be a PM space. A sequence $x=\{x_k\}_{k\in \mathbb{N}}$ in $X$ is said to be strongly convergent to
$\mathcal{L}\in X$ if for every $t>0$, $\exists$ a natural number $k_0$ such that
$$x_k\in\mathcal{N}_\mathcal{L}(t),\hspace{1 in} \text{whenever}~ k\geq k_0 .$$
\end{defn}
In this case, we write
$\mathcal{F}$-$\lim\limits_{k\rightarrow\infty}x_k=\mathcal{L}$ or, $x_k\stackrel{\mathcal{F}}\longrightarrow \mathcal{L}$.

\begin{defn}\cite{Sh4}\label{def215}
Let $(X,\mathcal{F},\tau)$ be a PM space. A sequence $x=\{x_k\}_{k\in\mathbb{N}}$ in $X$ is said to be strongly Cauchy if for every $t>0$, there exists a natural number $k_0$ such that
$$(x_k,x_r)\in\mathcal{U}(t), \hspace{1 in} \text{whenever}~ k,r\geq k_0.$$
\end{defn}

\begin{rem}
The Definition \ref{def215} can be restated as follows: A sequence $x=\{x_k\}_{k\in\mathbb{N}}$ in a PM space $(X,\mathcal{F},\tau)$ is said to be strongly Cauchy if for every $t>0$, there exists a natural number $k_0=k_0(t)$ such that
$$(x_k,x_{k_0})\in\mathcal{U}(t), \hspace{1 in} \text{whenever}~ k\geq k_0.$$
\end{rem}

\begin{defn}\cite{Se}\label{def216}
Let $(X,\mathcal{F},\tau)$ be a PM space. A sequence $x=\{x_k\}_{k \in \mathbb{N}}$ in $X$ is said to be strongly statistically convergent to $\alpha\in X$ if for any $t> 0$
\begin{center}
$d(\{k\in\mathbb{N}: \mathcal{F}_{x_k\alpha}(t)\leq 1-t\})=0,
~~~~~~~~\text{or},~~~~~~~~ d(\{ k\in\mathbb{N}:x_k\notin\mathcal{N}_\alpha(t)\})=0.$
\end{center}
In this case we write $st^{\mathcal{F}}$-$\lim\limits_{k\rightarrow\infty}x_k = \alpha$.
\end{defn}

\begin{defn}\cite{Se}\label{def217}
Let $(X,\mathcal{F},\tau)$ be a PM space. A sequence
$x=\{x_k\}_{k \in \mathbb{N}}$ in $X$ is said to be strongly statistically Cauchy if for any $t>0$, $\exists$ a natural number $N_0=N_0(t)$ such that 
$$d(\{k\in\mathbb{N}:\mathcal{F}_{x_kx_{N_0}}(t)\leq 1-t\})=0,~\text{or},~ d(\{k\in\mathbb{N}:x_k\notin\mathcal{N}_{x_{N_0}}(t)\})=0.$$
\end{defn}

\section{\textbf{Strong $\mathcal{A}^\mathcal{I}$-statistical convergence and strong $\mathcal{A}^\mathcal{I}$-statistical Cauchyness}}\label{sec3}

In this section, following the works of Das et al. \cite{Da5}, \c{S}en\c{c}imen et al. \cite{Se2} and Malik et al. \cite{Pr3} we study the notions of strong $\mathcal{A}^\mathcal{I}$-statistical convergence and strong $\mathcal{A}^\mathcal{I}$-statistical Cauchyness in a PM space. 

\begin{defn}\cite{Se2}\label{def220}
Let $(X,\mathcal{F},\tau)$ be a PM space and $x=\{x_k\}_{k\in \mathbb{N}}$ be a sequence in $X$. Then $x$ is said to be strongly $\mathcal{I}$-convergent to $\mathcal{L} \in X$, if for every $t>0$, the set
$$\{k\in \mathbb{N}: \mathcal{F}_{x_k\mathcal{L}}(t)\leq 1-t\}\in \mathcal{I},~\text{or},~\{k\in \mathbb{N}: x_k\notin\mathcal{N}_\mathcal{L}(t)\}\in \mathcal{I}.$$
\end{defn}
In this case we write $\mathcal
I^{\mathcal{F}}$-$\lim\limits_{k\rightarrow\infty}x_k = \mathcal{L}$.

\begin{defn}\cite{Se2}\label{def221}
Let $(X,\mathcal{F},\tau)$ be a PM space and $x=\{x_k\}_{k\in \mathbb{N}}$ be a sequence in $X$. Then $x$ is said to be strongly $\mathcal{I}$-Cauchy sequence if for every $t>0$, $\exists$ a natural number $k_0$ depending on $t$ such that, the set
$$\{k\in \mathbb{N}: \mathcal{F}_{x_kx_{k_0}}(t)\leq 1-t\}\in \mathcal{I},~\text{or},~\{k\in \mathbb{N}: x_k\notin\mathcal{N}_{x_{k_0}}(t)\}\in \mathcal{I}.$$
\end{defn}

\begin{note}\label{no21}
(i) If $\mathcal{I}=\mathcal{I}_{fin}=\{\mathcal{K}\subset
\mathbb{N}: \left|\mathcal{K}\right|<\infty\}$, then in a PM space
the notions of strong $\mathcal {I}_{fin}$-convergence and strong
$\mathcal {I}_{fin}$-Cauchyness coincide with the notions of
strong convergence and strong Cauchyness respectively.

(ii) If $\mathcal{I}=\mathcal{I}_{d}=\{\mathcal{K}\subset
\mathbb{N}: d(\mathcal{K})=0\}$, then in a PM space
the notions of strong $\mathcal {I}_{d}$-convergence and strong
$\mathcal {I}_{d}$-Cauchyness coincide with the notions of
strong statistical convergence \cite{Se} and strong statistical Cauchyness \cite{Se} respectively.

(iii) Let $\mathcal{I}$ be an admissible ideal in $\mathbb{N}$ then the notions of strong
$\mathcal{J}(\mathcal{A}^\mathcal{I})$-convergence and strong
$\mathcal{J}(\mathcal{A}^\mathcal{I})$-Cauchyness of sequences in a PM space
coincide with the notions of strong $\mathcal{A}^\mathcal{I}$-statistical
convergence and strong $\mathcal{A}^\mathcal{I}$-statistical Cauchyness
respectively. Further, if $\mathcal{I}=\mathcal{I}_{fin}=\{\mathcal{K}\subset\mathbb{N}: \left|\mathcal{K}\right|<\infty\}$, then the notions of strong $\mathcal{J}(\mathcal{A}^{\mathcal{I}_{fin}})$-convergence and strong $\mathcal{J}(\mathcal{A}^{\mathcal{I}_{fin}})$-Cauchyness of sequences in a PM space coincide with strong $\mathcal{A}$-statistical convergence \cite{Pr3} and strong $\mathcal{A}$-statistical Cauchyness \cite{Pr3} respectively.

(iv) If $\mathcal{A}$ is the Cesaro matrix $C_1$ and $\mathcal{I}$ is an admissible ideal,
then the notions of strong $\mathcal{J}({C_1}^\mathcal{I})$-convergence and
strong $\mathcal{J}({C_1}^\mathcal{I})$-Cauchyness of sequences in a PM space
coincide with the notions of strong $\mathcal{I}$-statistical convergence \cite{Da5} and
strong $\mathcal{I}$-statistical Cauchyness \cite{Da5} respectively.
\end{note}

In view of Definition \ref{def220}, Definition \ref{def221} and Note \ref{no21}.(iii) we now restate the definitions of strong $\mathcal{A}^\mathcal{I}$-statistical convergence and strong $\mathcal{A}^\mathcal{I}$-statistical Cauchyness in a PM space.

\begin{defn}\label{def31}\cite{Se2}
Let $(X,\mathcal{F},\tau)$ be a PM space and $x=\{x_k\}_{k\in\mathbb{N}}$ be a sequence in $X$. Then $x$ is said to be strongly $\mathcal{A}^\mathcal{I}$-statistically convergent to $\mathcal{L} \in X$, if for every $t>0$,
$$\delta_{\mathcal{A}^\mathcal{I}}(\{k\in \mathbb{N}: \mathcal{F}_{x_k\mathcal{L}}(t)\leq 1-t\})=0~\text{or,}~
\delta_{\mathcal{A}^\mathcal{I}}(\{k\in \mathbb{N}: x_k \notin \mathcal{N}_\mathcal{L}(t)\})=0.$$
We write it as,
$\mathcal{I}\mbox{-}st_\mathcal{A}^{\mathcal{F}}$-$\lim\limits_{k\rightarrow \infty}x_k = \mathcal{L} $ or simply as
$x_k\xrightarrow{\mathcal{A}^\mathcal{I}\mbox{-}st^{\mathcal{F}}}\mathcal{L}$. $\mathcal{L}$ is called the strong $\mathcal{A}^\mathcal{I}$-statistical limit of $x$.
\end{defn}

\begin{defn}\label{def32a}\cite{Se2}
Let $(X,\mathcal{F},\tau)$ be a PM space and $x=\{x_k\}_{k\in\mathbb{N}}$ be a sequence in $X$. Then $x$ is said to be strongly $\mathcal{A}^\mathcal{I}$-statistically Cauchy sequence if for every $t>0$, there exists a natural number $k_0$ depending on $t$ such that 
$$\delta_{\mathcal{A}^\mathcal{I}}(\{k\in \mathbb{N}: \mathcal{F}_{x_kx_{k_0}}(t)\leq 1-t\})=0,~
\text{or},~\delta_{\mathcal{A}^\mathcal{I}}(\{k\in \mathbb{N}: x_k\notin\mathcal{N}_{x_{k_0}}(t)\})=0.$$
\end{defn}

\begin{rem}\label{rem31}
The following three statements are equivalent:
\begin{enumerate}[(i)]
    \item $x_k\xrightarrow{\mathcal{A}^\mathcal{I}\mbox{-}st^{\mathcal{F}}}\mathcal{L}$
    \item For each $t>0,~\delta_{\mathcal{A}^\mathcal{I}}(\{k\in \mathbb{N}: d_L(\mathcal{F}_{x_k\mathcal{L}},\varepsilon_0)\geq t\})=0$
    \item $\mathcal{I}\mbox{-}st_\mathcal{A}^{\mathcal{F}}\mbox{-}\lim\limits_{k\rightarrow\infty}d_L(\mathcal{F}_{x_k\mathcal{L}},\varepsilon_0)=0$.
\end{enumerate}
\end{rem}
\begin{proof}
It is clear from Theorem \ref{thm21} and the Definition \ref{def31}. 
\end{proof}

\begin{thm}\label{thm31}
Let $(X,\mathcal{F},\tau)$ be a PM space and $x=\{x_k\}_{k\in\mathbb{N}}$ be a strongly $\mathcal{A}^\mathcal{I}$-statistically convergent sequence in $X$. Then strong $\mathcal{A}^\mathcal{I}$-statistical limit of $x$ is unique.
\end{thm}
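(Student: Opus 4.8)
The plan is to argue by contradiction. Suppose $x$ is strongly $\mathcal{A}^\mathcal{I}$-statistically convergent to two points $\mathcal{L}_1, \mathcal{L}_2 \in X$ with $\mathcal{L}_1 \neq \mathcal{L}_2$. By (P-2) we then have $\mathcal{F}_{\mathcal{L}_1\mathcal{L}_2} \neq \varepsilon_0$, so that $t_0 := d_L(\mathcal{F}_{\mathcal{L}_1\mathcal{L}_2},\varepsilon_0) > 0$. The goal is to exhibit a single index $k$ at which $x_k$ is simultaneously $d_L$-close to both $\mathcal{L}_1$ and $\mathcal{L}_2$, and then to use the triangle-function machinery of Theorem \ref{thm23} to force $d_L(\mathcal{F}_{\mathcal{L}_1\mathcal{L}_2},\varepsilon_0) < t_0$, contradicting the definition of $t_0$.

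First I would invoke the consequence of Theorem \ref{thm23} recorded immediately after it: for the fixed value $u = t_0 > 0$ there is an $\alpha > 0$ such that $d_L(\mathcal{F}_{pr},\varepsilon_0) < t_0$ whenever $d_L(\mathcal{F}_{pq},\varepsilon_0) < \alpha$ and $d_L(\mathcal{F}_{qr},\varepsilon_0) < \alpha$. Using the reformulation of strong $\mathcal{A}^\mathcal{I}$-statistical convergence in Remark \ref{rem31}(ii), convergence to $\mathcal{L}_1$ and to $\mathcal{L}_2$ respectively yields that the sets $A = \{k \in \mathbb{N}: d_L(\mathcal{F}_{x_k\mathcal{L}_1},\varepsilon_0) \geq \alpha\}$ and $B = \{k \in \mathbb{N}: d_L(\mathcal{F}_{x_k\mathcal{L}_2},\varepsilon_0) \geq \alpha\}$ both satisfy $\delta_{\mathcal{A}^\mathcal{I}}(A) = \delta_{\mathcal{A}^\mathcal{I}}(B) = 0$.

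Next I would combine these via the density properties listed in the introduction: by property (v), $\delta_{\mathcal{A}^\mathcal{I}}(A \cup B) = 0$, while property (i) gives $\delta_{\mathcal{A}^\mathcal{I}}(\mathbb{N}) = 1$; hence $A \cup B \neq \mathbb{N}$ and there exists some $k \in \mathbb{N} \setminus (A \cup B)$. For this $k$ we have both $d_L(\mathcal{F}_{x_k\mathcal{L}_1},\varepsilon_0) < \alpha$ and $d_L(\mathcal{F}_{x_k\mathcal{L}_2},\varepsilon_0) < \alpha$. Using the symmetry (P-3) to write $d_L(\mathcal{F}_{\mathcal{L}_1 x_k},\varepsilon_0) = d_L(\mathcal{F}_{x_k\mathcal{L}_1},\varepsilon_0) < \alpha$ and then applying the chosen $\alpha$ with $p = \mathcal{L}_1$, $q = x_k$, $r = \mathcal{L}_2$ yields $d_L(\mathcal{F}_{\mathcal{L}_1\mathcal{L}_2},\varepsilon_0) < t_0$, contradicting $t_0 = d_L(\mathcal{F}_{\mathcal{L}_1\mathcal{L}_2},\varepsilon_0)$. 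This contradiction forces $\mathcal{L}_1 = \mathcal{L}_2$, proving uniqueness.

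The density bookkeeping is routine; the step requiring the most care is the correct use of the triangle-function estimate from Theorem \ref{thm23}, namely selecting $\alpha$ for the specific value $u = t_0$ and observing that one admissible index $k$ already suffices, since the strict inequality $d_L(\mathcal{F}_{\mathcal{L}_1\mathcal{L}_2},\varepsilon_0) < t_0$ is by itself impossible. One should also take care to translate between the two equivalent formulations of strong $\mathcal{A}^\mathcal{I}$-statistical convergence (in terms of $\mathcal{F}_{x_k\mathcal{L}}(t) \leq 1-t$ versus $d_L(\mathcal{F}_{x_k\mathcal{L}},\varepsilon_0) \geq t$) supplied by Theorem \ref{thm21} and Remark \ref{rem31}, so that the density-zero exceptional sets are expressed directly in the $d_L$-metric used by the triangle estimate.
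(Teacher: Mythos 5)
Your proof is correct and takes essentially the same route as the paper's own argument: both fix $t=d_L(\mathcal{F}_{\mathcal{L}_1\mathcal{L}_2},\varepsilon_0)>0$, choose the auxiliary parameter via the consequence of Theorem \ref{thm23}, note that the two exceptional sets have $\mathcal{A}^\mathcal{I}$-density zero so their union cannot be all of $\mathbb{N}$, and reach the contradiction $d_L(\mathcal{F}_{\mathcal{L}_1\mathcal{L}_2},\varepsilon_0)<t$ at a single common index. The only cosmetic difference is that the paper records $\delta_{\mathcal{A}^\mathcal{I}}(Z_3^c(\gamma))=1$ where you merely observe the complement is non-empty, which is all that is needed.
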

\begin{proof}
If possible, let $\mathcal{I}\mbox{-}st_\mathcal{A}^{\mathcal{F}}$-$\lim\limits_{k\rightarrow\infty}x_k = \alpha_1 $ and $\mathcal{I}\mbox{-}st_\mathcal{A}^{\mathcal{F}}$-$\lim\limits_{k\rightarrow \infty}x_k = \alpha_2$ with $\alpha_1\neq \alpha_2$. So $\mathcal{F}_{\alpha_1\alpha_2}\neq\varepsilon_0$. Then there is a $t>0$ such that $d_L(\mathcal{F}_{\alpha_1\alpha_2},\varepsilon_0)=t$. We choose $\gamma>0$ so that $d_L(\mathcal{F}_{pq},\varepsilon_0)<\gamma$ and $d_L(\mathcal{F}_{qr},\varepsilon_0)<\gamma $ imply that $d_L(\mathcal{F}_{pr},\varepsilon_0)<t$. Since $\mathcal{I}\mbox{-}st_\mathcal{A}^{\mathcal{F}}$-$\lim\limits_{k\rightarrow \infty}x_k = \alpha_1 $ and $\mathcal{I}\mbox{-}st_\mathcal{A}^{\mathcal{F}}$-$\lim\limits_{k\rightarrow\infty}x_k = \alpha_2$, so $ \delta_{\mathcal{A}^\mathcal{I}}(Z_1(\gamma)) = 0$ and $\delta_{\mathcal{A}^\mathcal{I}}(Z_2(\gamma)) = 0$, where
$$Z_1(\gamma)=\{k\in\mathbb{N}: x_k\notin\mathcal{N}_{\alpha_1}(\gamma)\}$$ and
$$Z_2(\gamma)=\{k\in \mathbb{N}: x_k\notin\mathcal{N}_{\alpha_2}(\gamma)\}.$$ 
Now, let $Z_3(\gamma)=Z_1(\gamma)\cup Z_2(\gamma)$. Then $\delta_{\mathcal{A}^\mathcal{I}}(Z_3(\gamma))=0$ and so
$\delta_{\mathcal{A}^\mathcal{I}}(Z^c_3(\gamma))=1$. Let $k\in Z^c_3(\gamma).$ Then
$d_L(\mathcal{F}_{x_k\alpha_1},\varepsilon_0)<\gamma$ and
$d_L(\mathcal{F}_{\alpha_2x_k},\varepsilon_0)<\gamma$ and so
$d_L(\mathcal{F}_{\alpha_1\alpha_2},\varepsilon_0)<t$, a contradiction. Hence strong $\mathcal{A}^\mathcal{I}$-statistical limit of a strongly $\mathcal{A}^\mathcal{I}$-statistically convergent sequence in a PM space is unique.
\end{proof}

\begin{thm}\label{thm32}
Let $(X,\mathcal{F},\tau)$ be a PM space and
$x=\{x_k\}_{k\in\mathbb{N}}$, $y=\{y_k\}_{k\in\mathbb{N}}$ be two
sequences in $X$ such that
$x_k\xrightarrow{\mathcal{A}^\mathcal{I}\mbox{-}st^{\mathcal{F}}} p \in X $ and
$y_k\xrightarrow{\mathcal{A}^\mathcal{I}\mbox{-}st^{\mathcal{F}}} q \in X$. Then
$$\mathcal{I}\mbox{-}st_\mathcal{A}^{\mathcal{F}}\mbox{-}\lim\limits_{k\rightarrow\infty}d_L(\mathcal{F}_{x_ky_k}, \mathcal{F}_{pq})=0.$$
\end{thm}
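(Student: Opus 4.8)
The plan is to read the conclusion as the assertion that the real sequence $a_k:=d_L(\mathcal{F}_{x_ky_k},\mathcal{F}_{pq})$ is $\mathcal{A}^\mathcal{I}$-statistically null, i.e. that $\delta_{\mathcal{A}^\mathcal{I}}(\{k\in\mathbb{N}:a_k\geq\varepsilon\})=0$ for every $\varepsilon>0$. Fixing such an $\varepsilon$, I would exhibit an $\mathcal{A}^\mathcal{I}$-density-zero set that contains $\{k:a_k\geq\varepsilon\}$ and is assembled from the two hypotheses via Remark \ref{rem31}. Once such a superset is found, the ideal structure of $\mathcal{J}(\mathcal{A}^\mathcal{I})$ closes the argument immediately.

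The only genuinely PM-space ingredient, and the step I expect to be the main obstacle, is the uniform continuity of the probabilistic distance $\mathcal{F}$ with respect to the strong topology. Concretely, I want a $\delta>0$ depending only on $\varepsilon$ (not on $k$) such that $d_L(\mathcal{F}_{x_kp},\varepsilon_0)<\delta$ and $d_L(\mathcal{F}_{y_kq},\varepsilon_0)<\delta$ together force $d_L(\mathcal{F}_{x_ky_k},\mathcal{F}_{pq})<\varepsilon$. To obtain this I would start from the probabilistic triangle inequality (P-4) and the monotonicity of $\tau$, which give the sandwich
$$\mathcal{F}_{x_ky_k}\geq\tau(\mathcal{F}_{x_kp},\tau(\mathcal{F}_{pq},\mathcal{F}_{qy_k}))\quad\text{and}\quad\mathcal{F}_{pq}\geq\tau(\mathcal{F}_{px_k},\tau(\mathcal{F}_{x_ky_k},\mathcal{F}_{y_kq})).$$
Since $(\mathcal{D}^{+},d_L)$ is compact and $\tau$ is continuous, $\tau$ is in fact uniformly continuous there, and because $\varepsilon_0$ is the $\tau$-identity both right-hand sides tend to $\mathcal{F}_{pq}$, uniformly, as $\mathcal{F}_{x_kp},\mathcal{F}_{y_kq}\to\varepsilon_0$; converting these two order estimates into a single $d_L$-bound (using Theorem \ref{thm23} to control the passage through the $\mathcal{D}^{+}$-order) yields the desired $\delta$. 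Alternatively, this uniform continuity may simply be quoted from Schweizer and Sklar \cite{Sh5}.

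Granting the displayed implication, its contrapositive gives the inclusion
$$\{k:a_k\geq\varepsilon\}\subseteq\{k:d_L(\mathcal{F}_{x_kp},\varepsilon_0)\geq\delta\}\cup\{k:d_L(\mathcal{F}_{y_kq},\varepsilon_0)\geq\delta\}.$$
By the hypotheses $x_k\xrightarrow{\mathcal{A}^\mathcal{I}\mbox{-}st^{\mathcal{F}}}p$ and $y_k\xrightarrow{\mathcal{A}^\mathcal{I}\mbox{-}st^{\mathcal{F}}}q$, the equivalence of (i) and (ii) in Remark \ref{rem31} shows that each of the two sets on the right has $\mathcal{A}^\mathcal{I}$-density zero. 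Their union then has $\mathcal{A}^\mathcal{I}$-density zero by property (v) of the $\mathcal{A}^\mathcal{I}$-density (the union of two null sets is null), and since the family $\mathcal{J}(\mathcal{A}^\mathcal{I})$ of $\mathcal{A}^\mathcal{I}$-density-zero sets is an ideal, hence closed under subsets, the set $\{k:a_k\geq\varepsilon\}$ also has $\mathcal{A}^\mathcal{I}$-density zero. As $\varepsilon>0$ was arbitrary, this establishes $\mathcal{I}\mbox{-}st_\mathcal{A}^{\mathcal{F}}\mbox{-}\lim\limits_{k\rightarrow\infty}d_L(\mathcal{F}_{x_ky_k},\mathcal{F}_{pq})=0$, as required.
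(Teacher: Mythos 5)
Your proposal is correct, but it is worth noting how it relates to the paper's own proof, which is a one-line specialization: the authors simply invoke Theorem 3.1 of \c{S}en\c{c}imen--Pehlivan \cite{Se2} (the analogous statement for strong $\mathcal{I}$-convergence with respect to an arbitrary admissible ideal) applied to the ideal $\mathcal{J}(\mathcal{A}^\mathcal{I})$ of $\mathcal{A}^\mathcal{I}$-density-zero sets. What you have written is, in effect, a self-contained reconstruction of the proof of that cited theorem in the $\mathcal{A}^\mathcal{I}$ setting: the same two ingredients appear, namely the uniform continuity of $\mathcal{F}:X\times X\to(\mathcal{D}^+,d_L)$ and the inclusion of $\{k:d_L(\mathcal{F}_{x_ky_k},\mathcal{F}_{pq})\geq\varepsilon\}$ in the union of the two hypothesis-controlled null sets, followed by the ideal bookkeeping. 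Your approach buys self-containedness; the paper's buys brevity and generality (the general-ideal statement is proved once in \cite{Se2} and reused). The only fragile part of your write-up is the from-scratch sketch of uniform continuity of $\mathcal{F}$: the sandwich via (P-4) and the passage from order estimates to $d_L$-estimates is the standard route but needs more care than you give it (a one-sided bound $f\geq g$ with $d_L(g,F)$ small does not by itself control $d_L(f,F)$; you must exploit both inequalities of the sandwich simultaneously). Your fallback of quoting the uniform continuity of $\mathcal{F}$ from Schweizer--Sklar \cite{Sh5} is the right move --- indeed this is exactly what \cite{Se2} does, and what the present paper itself does later in the proof of Theorem \ref{thm39}, where the phrase ``as $\mathcal{F}$ is uniformly continuous'' is invoked without further justification.
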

\begin{proof}
The proof directly follows from [Theorem 3.1 \cite{Se2}], by taking the ideal
$ \mathcal{J}(\mathcal{A}^\mathcal{I})$.
\end{proof}

\begin{thm}\label{thm33}
Let $(X,\mathcal{F},\tau)$ be a  PM space and $x=\{x_k\}_{k\in\mathbb{N}}$ be a sequence in $X$. If $x$ is strongly convergent to $\mathcal{L} \in X$, then $\mathcal{I}\mbox{-}st_\mathcal{A}^{\mathcal{F}}\mbox{-}\lim\limits_{k\rightarrow\infty} x_k=\mathcal{L}$.
\end{thm}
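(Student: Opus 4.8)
The plan is to reduce strong $\mathcal{A}^\mathcal{I}$-statistical convergence to the single observation that every finite subset of $\mathbb{N}$ has $\mathcal{A}^\mathcal{I}$-density zero. First I would fix an arbitrary $t>0$ and invoke the hypothesis of strong convergence (Definition \ref{def214}): there exists a natural number $k_0$ such that $x_k\in\mathcal{N}_\mathcal{L}(t)$, that is $\mathcal{F}_{x_k\mathcal{L}}(t)>1-t$, for all $k\geq k_0$. Consequently the set
$$A(t)=\{k\in\mathbb{N}:\mathcal{F}_{x_k\mathcal{L}}(t)\leq 1-t\}$$
is contained in $\{1,2,\ldots,k_0-1\}$ and is therefore finite.

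The key step is then to verify that $\delta_{\mathcal{A}^\mathcal{I}}(A(t))=0$, for which it suffices to show that an arbitrary finite set $A\subset\mathbb{N}$ has $\mathcal{A}^\mathcal{I}$-density zero. Here I would use the Silverman--Toeplitz characterization: since $A$ is finite, the sum $\sum_{m\in A}a_{nm}$ is a finite sum of columns each satisfying $\lim_{n\to\infty}a_{nm}=0$ by condition (ii) of regularity, so $\sum_{m\in A}a_{nm}\to 0$ as $n\to\infty$ in the ordinary sense. Because $\mathcal{I}$ is admissible, ordinary convergence to $0$ forces $\mathcal{I}$-convergence to $0$, whence $\delta_{\mathcal{A}^\mathcal{I}}(A)=\mathcal{I}\mbox{-}\lim\limits_{n\to\infty}\sum_{m\in A}a_{nm}=0$. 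The same conclusion also follows directly from property (ii) of the density lemma recalled in Section \ref{sec1}, taking $\mathcal{K}_1=A$ and $\mathcal{K}_2=\emptyset$, since then $\delta_{\mathcal{A}^\mathcal{I}}(A)=\delta_{\mathcal{A}^\mathcal{I}}(\emptyset)=0$.

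Finally, since $t>0$ was arbitrary and $\delta_{\mathcal{A}^\mathcal{I}}(A(t))=0$ for every such $t$, Definition \ref{def31} yields $\mathcal{I}\mbox{-}st_\mathcal{A}^{\mathcal{F}}\mbox{-}\lim\limits_{k\rightarrow\infty}x_k=\mathcal{L}$, which completes the argument. I do not anticipate any serious obstacle: the entire proof is an unwinding of the two definitions, and the only point demanding a little care is the passage from the ordinary limit of the finite column sum to the corresponding $\mathcal{I}$-limit, which rests squarely on the admissibility of $\mathcal{I}$.
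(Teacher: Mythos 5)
Your proof is correct and is precisely the routine unwinding that the paper itself omits with the remark that ``the proof is trivial'': the exceptional set $A(t)$ is finite by strong convergence, and every finite set has $\mathcal{A}^\mathcal{I}$-density zero because the finite column sum $\sum_{m\in A}a_{nm}$ tends to $0$ ordinarily by Silverman--Toeplitz condition (ii), hence $\mathcal{I}$-converges to $0$ by admissibility of $\mathcal{I}$. One caution on your secondary justification: invoking property (ii) of the density lemma with $\mathcal{K}_2=\emptyset$ runs into the paper's misprint $\delta_{\mathcal{A}^\mathcal{I}}(\emptyset)=1$ (it should read $0$), and that property anyway presupposes that $\delta_{\mathcal{A}^\mathcal{I}}(A)$ exists, which is exactly what your primary column-sum argument establishes, so the Silverman--Toeplitz route should be regarded as the actual proof.
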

\begin{proof}
The proof is trivial and so is omitted.
\end{proof}

We now using the condtion AP$\mathcal{A}^\mathcal{I}$O in \cite{Pr4} to prove some useful results discussed in \cite{Pr1}.
\begin{defn}\textbf{(Additive property for $\mathcal{A}^\mathcal{I}$-density zero sets)} \cite{Pr4}\label{def32}
The $\mathcal{A}^\mathcal{I}$-density $\delta_{\mathcal{A}^\mathcal{I}}$ is said to satisfy the condition AP$\mathcal{A}^\mathcal{I}$O if, given any countable collection of
mutually disjoint sets $\{G_j\}_{j\in\mathbb N}$ in $\mathbb N$ with $\delta_{\mathcal{A}^\mathcal{I}}(G_j)=0$ for
each $j\in\mathbb N$, there exists a collection of sets $\{H_j\}_{j\in\mathbb N}$ in $\mathbb N$ with
the properties $\left|G_j\Delta H_j\right|<\infty$ for each $j\in\mathbb N$ and $\delta_{\mathcal{A}^\mathcal{I}}(H=\bigcup\limits_{j=1}^\infty H_j)=0$.
\end{defn}

\begin{defn}\label{def33}\cite{Se2}
Let $(X,\mathcal{F},\tau)$ be a PM space and $x=\{x_k\}_{k\in\mathbb{N}}$ be a sequence in $X$. Then $x$ is said to be strongly $\mathcal{I}^*$-convergent to $\mathcal{L}$ in $X$, if there exists a set $\mathcal{K}=\{k_1<k_2<...<k_j<...\}(\subset\mathbb{N})\in\mathbb{F}(\mathcal{I})$ such that $\mathbb{N}\setminus\mathcal{K}\in\mathcal{I}$ and the subsequence $\{x\}_{\mathcal{K}}$ strongly convergent to $\mathcal{L}$.
\end{defn}

\begin{note}\label{no31}
If we take $\mathcal I = \mathcal{J}(\mathcal{A}^\mathcal{I})$ then the notion of strong $\mathcal{I}^*$-convergence becomes the notion of strong $\mathcal{A}^{\mathcal{I}^*}$-statistical convergence.
\end{note}

In view of Definition \ref{def33} and Note \ref{no31} we restate the definition of strong $\mathcal{A}^{\mathcal{I}^*}$-statistical convergence.

\begin{defn}\cite{Se2}\label{def34}
Let $(X,\mathcal{F},\tau)$ be a PM space and $x=\{x_k\}_{k\in\mathbb{N}}$ be a sequence in $X$. Then $x$ is said to be strongly $\mathcal{A}^{\mathcal{I}^*}$-statistically convergent to $\mathcal{L}$ in $X$, if there exists a set $\mathcal{K}=\{k_1<k_2<...<k_j<...\}(\subset\mathbb{N})\in\mathbb{F}(\mathcal{I})$ such that $\delta_{\mathcal{A}^\mathcal{I}}(\mathcal{K})=1$ and the subsequence $\{x\}_{\mathcal{K}}$ strongly convergent to $\mathcal{L}$. In this case we write $x_k\xrightarrow{\mathcal{A}^{\mathcal{I}^*}\mbox{-}st^\mathcal{F}}\mathcal{L}$ and $\mathcal{L}$ is called strong $\mathcal{A}^{\mathcal{I}^*}$-statistical limit of $x$. 
\end{defn}

\begin{defn}\label{def35}
Let $(X,\mathcal{F},\tau)$ be a PM space and $x=\{x_k\}_{k\in\mathbb{N}}$ be a sequence in $X$. Then $x$ is said to be strong $\mathcal{A}^{\mathcal{I}^*}$-statistically Cauchy sequence if there exists a set $\mathcal{K}=\{k_1<k_2<...<k_j<...\}(\subset\mathbb{N})\in\mathbb{F}(\mathcal{I})$ such that $\delta_{\mathcal{A}^\mathcal{I}}(\mathcal{K})=1$ and the subsequence $\{x\}_{\mathcal{K}}$ strongly Cauchy in $X$.
\end{defn}

\begin{thm}\label{thm34}
Let $(X,\mathcal{F},\tau)$ be a  PM space, $x=\{x_k\}_{k\in\mathbb{N}}$ be a sequence in $X$ and $\mathcal{I}$ be an admissible ideal in $\mathbb N$ such that $\delta_{\mathcal{A}^\mathcal{I}}$ has the property AP$\mathcal{A}^\mathcal{I}$O. Then $x$ is strongly $\mathcal{A}^\mathcal{I}$-statistically convergent to $\mathcal{L}$ if and only if $x$ is strongly $\mathcal{A}^{\mathcal{I}^*}$-statistically convergent to $\mathcal{L}$.
\end{thm}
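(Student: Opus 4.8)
The plan is to prove the two implications separately, observing that only the forward implication (strong $\mathcal{A}^\mathcal{I}$-statistical convergence $\Rightarrow$ strong $\mathcal{A}^{\mathcal{I}^*}$-statistical convergence) will use the additive property AP$\mathcal{A}^\mathcal{I}$O, while the converse holds without it. For the converse, I would start from $x_k\xrightarrow{\mathcal{A}^{\mathcal{I}^*}\mbox{-}st^\mathcal{F}}\mathcal{L}$, so by Definition \ref{def34} there is $\mathcal{K}=\{k_1<k_2<\cdots\}$ with $\delta_{\mathcal{A}^\mathcal{I}}(\mathcal{K})=1$ such that $\{x\}_\mathcal{K}$ strongly converges to $\mathcal{L}$. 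Fixing $t>0$, strong convergence of the subsequence yields an index $j_0$ with $x_{k_j}\in\mathcal{N}_\mathcal{L}(t)$ for all $j\geq j_0$, so $\{k\in\mathbb{N}:x_k\notin\mathcal{N}_\mathcal{L}(t)\}$ is contained in $(\mathbb{N}\setminus\mathcal{K})\cup\{k_1,\dots,k_{j_0-1}\}$. The latter is the union of a set of $\mathcal{A}^\mathcal{I}$-density zero with a finite set, hence of density zero by property (ii), and a subset of a density-zero set is again density zero since $\mathcal{J}(\mathcal{A}^\mathcal{I})$ is an ideal; this is exactly strong $\mathcal{A}^\mathcal{I}$-statistical convergence to $\mathcal{L}$.

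For the forward direction, assume $x_k\xrightarrow{\mathcal{A}^\mathcal{I}\mbox{-}st^\mathcal{F}}\mathcal{L}$. For each $j\in\mathbb{N}$ I would set $A_j=\{k\in\mathbb{N}:d_L(\mathcal{F}_{x_k\mathcal{L}},\varepsilon_0)\geq 1/j\}$; by Remark \ref{rem31} each $A_j$ has $\mathcal{A}^\mathcal{I}$-density zero, and $A_1\subseteq A_2\subseteq\cdots$ because $1/j$ is decreasing. I then disjointify by putting $G_1=A_1$ and $G_j=A_j\setminus A_{j-1}$ for $j\geq 2$; these are mutually disjoint, each is a subset of $A_j\in\mathcal{J}(\mathcal{A}^\mathcal{I})$, hence $\delta_{\mathcal{A}^\mathcal{I}}(G_j)=0$, and $\bigcup_{i=1}^{j}G_i=A_j$. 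Applying AP$\mathcal{A}^\mathcal{I}$O to $\{G_j\}_{j\in\mathbb{N}}$ produces $\{H_j\}_{j\in\mathbb{N}}$ with $\left|G_j\Delta H_j\right|<\infty$ for every $j$ and $\delta_{\mathcal{A}^\mathcal{I}}(H)=0$ for $H=\bigcup_j H_j$. Setting $\mathcal{K}=\mathbb{N}\setminus H$ gives $\delta_{\mathcal{A}^\mathcal{I}}(\mathcal{K})=1$ by property (iv), and it remains only to show that $\{x\}_\mathcal{K}$ strongly converges to $\mathcal{L}$.

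This last step is where the finite symmetric differences must be handled carefully, and I expect it to be the main obstacle. Fixing $t>0$ and choosing $j$ with $1/j<t$, the finiteness of $\left|G_i\Delta H_i\right|$ for $i=1,\dots,j$ forces $F=A_j\setminus\bigcup_{i=1}^{j}H_i$ to be finite, since it is contained in the (finite) symmetric difference between $\bigcup_{i=1}^{j}G_i=A_j$ and $\bigcup_{i=1}^{j}H_i$. Taking $k_0=\max F+1$, any $k\in\mathcal{K}$ with $k\geq k_0$ satisfies $k\notin H\supseteq\bigcup_{i=1}^{j}H_i$ and $k\notin F$, which together force $k\notin A_j$, i.e. $d_L(\mathcal{F}_{x_k\mathcal{L}},\varepsilon_0)<1/j<t$ and so $x_k\in\mathcal{N}_\mathcal{L}(t)$. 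Hence $\{x\}_\mathcal{K}$ strongly converges to $\mathcal{L}$, and with $\delta_{\mathcal{A}^\mathcal{I}}(\mathcal{K})=1$ this gives $x_k\xrightarrow{\mathcal{A}^{\mathcal{I}^*}\mbox{-}st^\mathcal{F}}\mathcal{L}$. The subtlety I must respect is that AP$\mathcal{A}^\mathcal{I}$O only corrects each $G_j$ up to a finite modification, so one cannot work with a single $G_j$ or $H_j$ but must pass to the finite union $\bigcup_{i\leq j}H_i$, whose symmetric difference with $A_j$ remains finite, in order to extract one threshold $k_0$ beyond which membership in $\mathcal{K}$ excludes membership in $A_j$.
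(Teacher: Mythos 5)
Your proposal is correct and follows essentially the same route as the paper: your sets $G_1=A_1$, $G_j=A_j\setminus A_{j-1}$ are exactly the paper's annular sets $\{k\in\mathbb{N}:\frac{1}{j-1}>d_L(\mathcal{F}_{x_k\mathcal{L}},\varepsilon_0)\geq\frac{1}{j}\}$, and your finite-set $F$ with threshold $k_0$ plays the same role as the paper's $n'$ beyond which $\bigcup_{i\leq j}G_i$ and $\bigcup_{i\leq j}H_i$ coincide. The only cosmetic difference is the converse, which you prove directly via the density-zero union $(\mathbb{N}\setminus\mathcal{K})\cup\{k_1,\dots,k_{j_0-1}\}$, whereas the paper delegates it to Theorem 3.2 of \cite{Se2} applied to the ideal $\mathcal{J}(\mathcal{A}^\mathcal{I})$; your inline argument is the standard proof of that cited result, and you correctly note that AP$\mathcal{A}^\mathcal{I}$O is not needed there.
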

\begin{proof}
Let $x=\{x_k\}_{k\in\mathbb{N}}$ be a sequence in $X$ such that $x$ is strongly $\mathcal{A}^\mathcal{I}$-statistically convergent to $\mathcal{L}\in X$. Then for all $t>0$, the set $\{k\in\mathbb N:\mathcal{F}_{x_k\mathcal{L}}(t)\leq 1-t\}$
has $\mathcal{A}^\mathcal{I}$-density zero. Then, $$\delta_{\mathcal{A}^\mathcal{I}}(\{k\in\mathbb{N}:d_L(\mathcal{F}_{x_k\mathcal{L}},\varepsilon_0)\geq t\})=0.$$ 
Set
$G_1=\{k\in\mathbb{N}:d_L(\mathcal{F}_{x_k\mathcal{L}},\varepsilon_0)\geq 1\}$, $G_j=\{k\in\mathbb{N}:\frac{1}{j-1}> d_L(\mathcal{F}_{x_k\mathcal{L}},\varepsilon_0)\geq\frac{1}{j}\}$ for $ j \geq 2, j \in
\mathbb{N}$. Then $\{G_j\}_{j\in\mathbb N}$ is a sequence of mutually disjoint subsets of $\mathbb{N}$ with $\delta_{\mathcal{A}^\mathcal{I}}(G_j)=0$ for each $j\in\mathbb{N}$. Since $\delta_{\mathcal{A}^\mathcal{I}}$ satisfies the property $AP\mathcal{A}^\mathcal{I}O$ so there exists a sequence $\{H_j\}_{j\in\mathbb N}$ of subsets of $\mathbb{N}$ with $\left|G_j\Delta
H_j\right|<\infty$ and $\delta_{\mathcal{A}^\mathcal{I}}(H=\bigcup\limits_{j=1}^\infty H_j)=0$.
We claim that $\lim\limits_{\stackrel{\stackrel{k\in \mathcal{M}}{k\rightarrow \infty}}~}x_k=\mathcal{L}$ where $\mathcal{M}=\mathbb{N}\setminus H$. To prove our claim, let $\gamma>0$ be given. Choose $i\in \mathbb{N}$ so that $\frac{1}{i+1}<\gamma $. Then
$\{k\in\mathbb N: d_L(\mathcal{F}_{x_k\mathcal{L}},\varepsilon_0)\geq \gamma\}\subset
\bigcup\limits_{j=1}^{i+1}G_j$. Since $\left|G_j\Delta
H_j\right|<\infty$ for all $j=1,2,...,i+1$, so there exists $n'\in\mathbb N$
such that
$\bigcup\limits_{j=1}^{i+1}G_j\cap(n',\infty)=\bigcup\limits_{j=1}^{i+1}H_j\cap(n',\infty)$.
Now if $k\notin H$, $k>n'$ then $k\notin
\bigcup\limits_{j=1}^{i+1}H_j$ and consequently, $k\notin
\bigcup\limits_{j=1}^{i+1}G_j$, which implies
$d_L(\mathcal{F}_{x_k\mathcal{L}},\varepsilon_0)<\gamma$. Therefore, $x$ is strongly $\mathcal{A}^{\mathcal{I}^*}$-statistically convergent to $\mathcal{L}$.

The proof of the converse part directly follows from [Theorem 3.2 \cite{Se2}], by taking the ideal
$ \mathcal{J}(\mathcal{A}^\mathcal{I})$.
\end{proof}

\begin{thm}\label{thm35}
Let $(X,\mathcal{F},\tau)$ be a PM space, $x=\{x_k\}_{k\in\mathbb{N}}$ be a sequence in $X$ and $\mathcal{I}$ be an ideal such that $\delta_{\mathcal{A}^\mathcal{I}}$ satisfies the property AP$\mathcal{A}^\mathcal{I}$O. Then $x_k\xrightarrow{\mathcal{A}^\mathcal{I}\mbox{-}st^{\mathcal{F}}}\mathcal{L}$ if and only if there exists a sequence $\{y_k\}_{k\in\mathbb{N}}$ such that $x_k=y_k$ for $a.a.k(\mathcal{A}^\mathcal{I})$ and $y_k\xrightarrow{\mathcal{F}}\mathcal{L}$.
\end{thm}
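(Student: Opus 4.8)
The plan is to prove the two implications separately, noting that the property AP$\mathcal{A}^\mathcal{I}$O is needed only for the forward (necessity) direction; the sufficiency direction holds for any admissible $\mathcal{I}$.

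First I would dispose of the \emph{sufficiency} (``if'') direction. Suppose such a sequence $\{y_k\}_{k\in\mathbb{N}}$ exists, so that $M=\{k\in\mathbb{N}: x_k\neq y_k\}$ satisfies $\delta_{\mathcal{A}^\mathcal{I}}(M)=0$ and $y_k\xrightarrow{\mathcal{F}}\mathcal{L}$. Fix $t>0$. By strong convergence of $\{y_k\}$ (Definition \ref{def214}), the set $F(t)=\{k\in\mathbb{N}: y_k\notin\mathcal{N}_{\mathcal{L}}(t)\}$ is finite. For every $k\notin M$ we have $x_k=y_k$ and hence $\mathcal{F}_{x_k\mathcal{L}}=\mathcal{F}_{y_k\mathcal{L}}$, so $\{k\in\mathbb{N}: x_k\notin\mathcal{N}_{\mathcal{L}}(t)\}\subseteq M\cup F(t)$. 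Since $\mathcal{J}(\mathcal{A}^\mathcal{I})$ is an admissible ideal, it contains the finite set $F(t)$ and the density-zero set $M$, hence their union and therefore every subset of it; thus $\delta_{\mathcal{A}^\mathcal{I}}(\{k\in\mathbb{N}: x_k\notin\mathcal{N}_{\mathcal{L}}(t)\})=0$, giving $x_k\xrightarrow{\mathcal{A}^\mathcal{I}\mbox{-}st^{\mathcal{F}}}\mathcal{L}$.

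For the \emph{necessity} (``only if'') direction I would invoke Theorem \ref{thm34}. Since $\delta_{\mathcal{A}^\mathcal{I}}$ has the property AP$\mathcal{A}^\mathcal{I}$O and $x_k\xrightarrow{\mathcal{A}^\mathcal{I}\mbox{-}st^{\mathcal{F}}}\mathcal{L}$, the sequence $x$ is strongly $\mathcal{A}^{\mathcal{I}^*}$-statistically convergent to $\mathcal{L}$. By Definition \ref{def34} there is a set $\mathcal{K}=\{k_1<k_2<\cdots\}$ with $\delta_{\mathcal{A}^\mathcal{I}}(\mathcal{K})=1$ along which the subsequence $\{x\}_{\mathcal{K}}$ strongly converges to $\mathcal{L}$. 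I would then define
$$y_k=\begin{cases} x_k, & k\in\mathcal{K},\\ \mathcal{L}, & k\notin\mathcal{K}.\end{cases}$$
Then $\{k\in\mathbb{N}: x_k\neq y_k\}\subseteq\mathbb{N}\setminus\mathcal{K}$, whose $\mathcal{A}^\mathcal{I}$-density is $0$, so $x_k=y_k$ for $a.a.k(\mathcal{A}^\mathcal{I})$.

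It remains to check $y_k\xrightarrow{\mathcal{F}}\mathcal{L}$, and this is where the construction earns its keep. Given $t>0$, strong convergence of $\{x\}_{\mathcal{K}}$ furnishes an index $J$ with $x_{k_j}\in\mathcal{N}_{\mathcal{L}}(t)$ for all $j\geq J$; put $N=k_J$. For $k\geq N$ with $k\in\mathcal{K}$ we have $k=k_j$ for some $j\geq J$, whence $y_k=x_{k_j}\in\mathcal{N}_{\mathcal{L}}(t)$; while for $k\notin\mathcal{K}$ we have $y_k=\mathcal{L}$, and since $\mathcal{F}_{\mathcal{L}\mathcal{L}}=\varepsilon_0$ with $\varepsilon_0(t)=1>1-t$, again $y_k\in\mathcal{N}_{\mathcal{L}}(t)$. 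Thus $y_k\in\mathcal{N}_{\mathcal{L}}(t)$ for every $k\geq N$, i.e.\ $y_k\xrightarrow{\mathcal{F}}\mathcal{L}$, completing the proof. The main obstacle is front-loaded into Theorem \ref{thm34}: once the density-one index set $\mathcal{K}$ along which $x$ strongly converges is available, the modification to $\{y_k\}$ and the verification of its strong convergence are routine.
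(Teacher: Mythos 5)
Your proposal is correct and follows essentially the same route as the paper: the necessity direction invokes Theorem \ref{thm34} to extract a density-one set $\mathcal{K}$ and defines $y_k=x_k$ on $\mathcal{K}$, $y_k=\mathcal{L}$ off $\mathcal{K}$ (you additionally spell out the ``clearly'' step that $y_k\xrightarrow{\mathcal{F}}\mathcal{L}$), while the sufficiency direction rests on the same decomposition $\{k: x_k\notin\mathcal{N}_{\mathcal{L}}(t)\}\subseteq\{k:x_k\neq y_k\}\cup\{k:y_k\notin\mathcal{N}_{\mathcal{L}}(t)\}$, which you handle via the ideal properties of $\mathcal{J}(\mathcal{A}^\mathcal{I})$ rather than the paper's explicit inequality between matrix sums --- a cosmetic, not substantive, difference.
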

\begin{proof}
Let $x_k\xrightarrow{\mathcal{A}^\mathcal{I}\mbox{-}st^{\mathcal{F}}}\mathcal{L}$. Then we have $$\mathcal{I}\mbox{-}st_\mathcal{A}^{\mathcal{F}}\mbox{-}\lim\limits_{k\rightarrow\infty}d_L(\mathcal{F}_{x_k\mathcal{L}},\varepsilon_0)=0.$$ 
So by Theorem \ref{thm34}, there is a set $\mathcal{M}=\{j_1<j_2<...<j_n<...\}\subset\mathbb{N}$ such that $\delta_{\mathcal{A}^\mathcal{I}}(\mathcal{M})=1$ and $\mathcal{F}\mbox{-}\lim\limits_{n\rightarrow\infty}d_L(\mathcal{F}_{x_{j_n}\mathcal{L}},\varepsilon_0)=0$. 

We now construct a sequence $y=\{y_k\}_{k\in\mathbb{N}}$ as follows: 
\[ y_k = \left\{
  \begin{array}{l l}
    x_k, & \quad \text{if}~ k\in \mathcal{M} \\
    \mathcal{L}, & \quad \text{if}~ k\notin \mathcal{M}.
  \end{array} \right.\]\\
Then clearly, $y_k\xrightarrow{\mathcal{F}}\mathcal{L}$ and $x_k=y_k$ for $a.a.k(\mathcal{A}^\mathcal{I})$.

Conversely, let $x_k=y_k$ for $a.a.k(\mathcal{A}^\mathcal{I})$ and $y_k\xrightarrow{\mathcal{F}}\mathcal{L}$. Let $t>0$. Since $\mathcal{A}$ is a non-negative regular summability matrix so there exists an $N_0\in\mathbb{N}$ such that for each of $n\geq N_0$, we get

$$\sum\limits_{x_k\notin\mathcal{N}_\mathcal{L}(t)}a_{nk}\leq\sum\limits_{x_k\neq y_k}a_{nk}+\sum\limits_{y_k\notin\mathcal{N}_\mathcal{L}(t)}a_{nk}.$$
As $\{y_k\}_{k\in\mathbb{N}}$ is strongly convergent to $\mathcal{L}$, so the set $\{k\in\mathbb{N}:y_k\notin\mathcal{N}_\mathcal{L}(t)\}$ is finite and so $\delta_{\mathcal{A}^\mathcal{I}}(\{k\in \mathbb{N}:y_k\notin\mathcal{N}_\mathcal{L}(t)\})=0$.\\
Thus,
\begin{eqnarray*}
&& ~~~\delta_{\mathcal{A}^\mathcal{I}}(\{k\in \mathbb{N}:x_k\notin\mathcal{N}_\mathcal{L}(t)\})\\
&& \leq \delta_{\mathcal{A}^\mathcal{I}}(\{k\in \mathbb{N}:x_k\neq y_k\})+ \delta_{\mathcal{A}^\mathcal{I}}(\{k\in \mathbb{N}:y_k\notin\mathcal{N}_\mathcal{L}(t)\})=0.
\end{eqnarray*}
Therefore $\delta_{\mathcal{A}^\mathcal{I}}(\{k\in \mathbb{N}:x_k\notin\mathcal{N}_\mathcal{L}(t)\})=0$ i.e., the sequence $\{x_k\}_{k\in\mathbb{N}}$ is strongly $\mathcal{A}^\mathcal{I}$-statistically convergent to $\mathcal{L}$.
\end{proof}

\begin{defn}\label{def36}\cite{Ko1}
Let $(X,\rho)$ be a metric space and $x=\{x_k\}_{k\in\mathbb{N}}$ be a sequence in $X$. Then $x$ is said to be $\mathcal{I}$-Cauchy in $X$ if for every $\gamma>0$, there exists a natural number $k_0$ such that
$$\{k\in\mathbb{N}:\rho(x_k,x_{k_0})\geq\gamma\}\in\mathcal{I}.$$
\end{defn}
\begin{note}\label{no32}
If we take $\mathcal I = \mathcal{J}(\mathcal{A}^\mathcal{I})$, then the notion of $\mathcal{I}$-Cauchyness becomes the notion of $\mathcal{A}^\mathcal{I}$-Cauchyness.
\end{note}
Now we discuss the following lemma in a metric space which is needed to study some properties of strong $\mathcal{A}^\mathcal{I}$-statistical Cauchyness in PM spaces.

\begin{lem}\label{lem31}
Let $(X,\rho)$ be a metric space and $x=\{x_k\}_{k\in\mathbb{N}}$ be a sequence in $X$. Then the following statements are equivalent:
\begin{enumerate}
	\item $x$ is an $\mathcal{A}^\mathcal{I}$-statistically Cauchy sequence.
	\item  For all $\gamma>0$, there is a set $\mathcal{M}\subset\mathbb{N}$ such that $\delta_{\mathcal{A}^\mathcal{I}}(\mathcal{M})=0$ and $\rho(x_m,x_n)<\gamma$ for all $m,n\notin \mathcal{M}$.
	\item For every $\gamma>0$, $\delta_{\mathcal{A}^\mathcal{I}}(\{j\in\mathbb{N}:\delta_{\mathcal{A}^\mathcal{I}}(D_j)\neq 0\})=0$, where $D_j(\gamma)=\{k\in\mathbb{N}:\rho(x_k,x_j)\geq \gamma\}$, $j\in\mathbb{N}$.
\end{enumerate}
\end{lem}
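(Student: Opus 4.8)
The plan is to prove the three equivalences by the cyclic chain $(1)\Rightarrow(2)\Rightarrow(3)\Rightarrow(1)$, each link resting on the triangle inequality in $(X,\rho)$ together with one elementary monotonicity fact about $\mathcal{A}^\mathcal{I}$-density. The fact I will lean on throughout is that, since $\mathcal{A}$ is non-negative, whenever $B\subset A$ we have $0\leq(\mathcal{A}\chi_B)_n\leq(\mathcal{A}\chi_A)_n$ for every $n$; hence if $\delta_{\mathcal{A}^\mathcal{I}}(A)=0$ then, for any $\varepsilon>0$, the set $\{n:(\mathcal{A}\chi_B)_n\geq\varepsilon\}$ is contained in $\{n:(\mathcal{A}\chi_A)_n\geq\varepsilon\}\in\mathcal{I}$, so the $\mathcal{I}$-limit of $(\mathcal{A}\chi_B)_n$ exists and equals $0$, i.e.\ $\delta_{\mathcal{A}^\mathcal{I}}(B)=0$. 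Notably this argument never invokes the property $AP\mathcal{A}^\mathcal{I}O$, so the lemma will hold for an arbitrary admissible ideal $\mathcal{I}$.

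For $(1)\Rightarrow(2)$ I would fix $\gamma>0$ and unwind the $\mathcal{A}^\mathcal{I}$-statistical Cauchy hypothesis (Definition~\ref{def36} read through Note~\ref{no32}) at the level $\gamma/2$ to produce an index $k_0$ with $\delta_{\mathcal{A}^\mathcal{I}}(\{k:\rho(x_k,x_{k_0})\geq\gamma/2\})=0$, and simply take $\mathcal{M}$ to be this density-zero set. For $m,n\notin\mathcal{M}$ the triangle inequality gives $\rho(x_m,x_n)\leq\rho(x_m,x_{k_0})+\rho(x_{k_0},x_n)<\gamma$, which is $(2)$. For $(2)\Rightarrow(3)$ I would fix $\gamma>0$, take the set $\mathcal{M}$ furnished by $(2)$ for this same $\gamma$, and note that for every $j\notin\mathcal{M}$ and every $k\notin\mathcal{M}$ we have $\rho(x_k,x_j)<\gamma$, that is $k\notin D_j(\gamma)$; hence $D_j(\gamma)\subset\mathcal{M}$ and so $\delta_{\mathcal{A}^\mathcal{I}}(D_j(\gamma))=0$ by the monotonicity fact. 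Consequently $\{j:\delta_{\mathcal{A}^\mathcal{I}}(D_j(\gamma))\neq0\}\subset\mathcal{M}$, and a second application of monotonicity shows this set has $\mathcal{A}^\mathcal{I}$-density zero, which is $(3)$.

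Finally, for $(3)\Rightarrow(1)$ I would fix $\gamma>0$ and set $B=\{j:\delta_{\mathcal{A}^\mathcal{I}}(D_j(\gamma))\neq0\}$, which has density zero by hypothesis; by the complementation property $\delta_{\mathcal{A}^\mathcal{I}}(B^{c})=1-\delta_{\mathcal{A}^\mathcal{I}}(B)$ recorded above, $B^{c}$ has $\mathcal{A}^\mathcal{I}$-density $1$ and is therefore non-empty, so I may choose $j_0\notin B$. Then $\delta_{\mathcal{A}^\mathcal{I}}(\{k:\rho(x_k,x_{j_0})\geq\gamma\})=\delta_{\mathcal{A}^\mathcal{I}}(D_{j_0}(\gamma))=0$, which is exactly the $\mathcal{A}^\mathcal{I}$-statistical Cauchy condition with witness $k_0=j_0$.

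The one point that deserves care, and the nearest thing to an obstacle, is the correct reading of the clause ``$\delta_{\mathcal{A}^\mathcal{I}}(D_j)\neq0$'' in $(3)$: since the defining $\mathcal{I}$-limit need not exist for an arbitrary subset of $\mathbb{N}$, I will interpret the set in $(3)$ as $\{\,j:\text{it is not the case that }\delta_{\mathcal{A}^\mathcal{I}}(D_j(\gamma))=0\,\}$. With this convention the monotonicity step in $(2)\Rightarrow(3)$ genuinely places every $j\notin\mathcal{M}$ outside that set (the density there exists and equals $0$), and in $(3)\Rightarrow(1)$ membership of $j_0$ in the complement of $B$ really does guarantee that $\delta_{\mathcal{A}^\mathcal{I}}(D_{j_0}(\gamma))$ exists and is zero, so that the chosen $j_0$ is a legitimate Cauchy witness.
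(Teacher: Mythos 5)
Your argument is correct and complete, but there is nothing in the paper to measure it against: Lemma \ref{lem31} is stated bare, with no proof supplied (it is only invoked later, in the proof of Theorem \ref{thm39}). The natural antecedents are Fridy's lemma for statistically Cauchy sequences and Dems' treatment of $\mathcal{I}$-Cauchy sequences (item [Dem] in the paper's bibliography), and your cyclic chain $(1)\Rightarrow(2)\Rightarrow(3)\Rightarrow(1)$ is exactly that standard route, carried out correctly in the $\mathcal{A}^{\mathcal{I}}$-density setting. Each link is sound: your monotonicity fact (a subset of an $\mathcal{A}^{\mathcal{I}}$-density-zero set has $\mathcal{A}^{\mathcal{I}}$-density zero, the density existing) follows, as you say, from $a_{nk}\geq 0$ together with $\{n:(\mathcal{A}\chi_B)_n\geq\varepsilon\}\subset\{n:(\mathcal{A}\chi_A)_n\geq\varepsilon\}\in\mathcal{I}$ and the hereditary property of ideals; the triangle-inequality steps at levels $\gamma/2$ and $\gamma$ are the right ones; and your two points of care are precisely what make the chain close up, namely reading the set in $(3)$ as $\{j: \text{not}~(\delta_{\mathcal{A}^{\mathcal{I}}}(D_j(\gamma))=0)\}$, so that non-existence of the density counts as ``$\neq 0$'', and observing that $B^{c}\neq\emptyset$ because $\delta_{\mathcal{A}^{\mathcal{I}}}(\mathbb{N})=1\neq 0$, so a legitimate Cauchy witness $j_0$ (depending on $\gamma$, as the definition permits) can actually be selected. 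Your closing observation is also accurate and worth keeping: no part of the argument uses the property AP$\mathcal{A}^{\mathcal{I}}$O, so the lemma holds for an arbitrary admissible ideal $\mathcal{I}$; this matters because Theorem \ref{thm39}, which relies on the lemma, imposes no such hypothesis.
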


\begin{thm}\label{thm36}
Let $(X,\mathcal{F},\tau)$ be a  PM space, $x=\{x_k\}_{k\in\mathbb{N}}$ be a sequence in $X$ and $\mathcal{I}$ be an admissible ideal in $\mathbb N$ such that $\delta_{\mathcal{A}^\mathcal{I}}$ has the property AP$\mathcal{A}^\mathcal{I}$O. Then $x$ is strongly $\mathcal{A}^\mathcal{I}$-statistically Cauchy sequence in $X$ if and only if $x$ is strongly $\mathcal{A}^{\mathcal{I}^*}$-statistically Cauchy sequence in $X$.
\end{thm}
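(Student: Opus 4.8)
The plan is to follow the pattern of Theorem \ref{thm34}, with two changes dictated by Cauchyness: the single limit $\mathcal{L}$ is replaced by a family of reference terms $x_{p_j}$ extracted from the Cauchy condition, and the role of the triangle inequality is played by the vicinity composition property of Theorem \ref{thm23} (since $d_L(\mathcal{F}_{\cdot\,\cdot},\varepsilon_0)$ is not a genuine metric on $X$). Only the implication ``strongly $\mathcal{A}^\mathcal{I}$-statistically Cauchy $\Rightarrow$ strongly $\mathcal{A}^{\mathcal{I}^*}$-statistically Cauchy'' uses AP$\mathcal{A}^\mathcal{I}$O; the converse is elementary. The structure of the forward step mirrors condition $(2)$ of Lemma \ref{lem31}, but realized through the vicinity structure rather than a metric.

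For the forward implication I would first invoke Definition \ref{def32a} together with Theorem \ref{thm21} to obtain, for each $j\in\mathbb{N}$, an index $p_j$ such that the set $A_j=\{k\in\mathbb{N}:d_L(\mathcal{F}_{x_kx_{p_j}},\varepsilon_0)\geq 1/j\}$ has $\delta_{\mathcal{A}^\mathcal{I}}(A_j)=0$. Since the $A_j$ need not be disjoint, I would disjointify them by $G_1=A_1$ and $G_j=A_j\setminus\bigcup_{i<j}A_i$ for $j\geq 2$; these are mutually disjoint, each of $\mathcal{A}^\mathcal{I}$-density zero (as subsets of density-zero sets), and satisfy the book-keeping identity $\bigcup_{i=1}^{j}G_i=\bigcup_{i=1}^{j}A_i$ for every $j$. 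Applying AP$\mathcal{A}^\mathcal{I}$O (Definition \ref{def32}) to $\{G_j\}_{j\in\mathbb{N}}$ produces sets $\{H_j\}_{j\in\mathbb{N}}$ with $|G_j\Delta H_j|<\infty$ and $\delta_{\mathcal{A}^\mathcal{I}}(H)=0$, where $H=\bigcup_j H_j$; I would then set $\mathcal{M}=\mathbb{N}\setminus H$, so that $\delta_{\mathcal{A}^\mathcal{I}}(\mathcal{M})=1-\delta_{\mathcal{A}^\mathcal{I}}(H)=1$.

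To see that $\{x\}_{\mathcal{M}}$ is strongly Cauchy, fix $t>0$ and use Theorem \ref{thm23} to get $\alpha>0$ such that $d_L(\mathcal{F}_{pq},\varepsilon_0)<\alpha$ and $d_L(\mathcal{F}_{qr},\varepsilon_0)<\alpha$ force $d_L(\mathcal{F}_{pr},\varepsilon_0)<t$, then choose $j$ with $1/j<\alpha$. Because $|G_i\Delta H_i|<\infty$ for $i=1,\dots,j$, there is $n'$ beyond which $\bigcup_{i=1}^{j}G_i$ and $\bigcup_{i=1}^{j}H_i$ coincide; hence for $k\in\mathcal{M}$ with $k>n'$ we have $k\notin\bigcup_{i=1}^{j}H_i$, so $k\notin\bigcup_{i=1}^{j}G_i=\bigcup_{i=1}^{j}A_i$, and in particular $k\notin A_j$, i.e. $d_L(\mathcal{F}_{x_kx_{p_j}},\varepsilon_0)<1/j<\alpha$. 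Taking the middle point to be $x_{p_j}$ in the composition property, any $k,k'\in\mathcal{M}$ with $k,k'>n'$ then give $d_L(\mathcal{F}_{x_kx_{k'}},\varepsilon_0)<t$, which (via Theorem \ref{thm21}) is exactly the strong Cauchy condition for the subsequence indexed by $\mathcal{M}$; this establishes strong $\mathcal{A}^{\mathcal{I}^*}$-statistical Cauchyness.

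The converse needs no AP property. Assuming strong $\mathcal{A}^{\mathcal{I}^*}$-statistical Cauchyness, take $\mathcal{K}=\{k_1<k_2<\cdots\}$ with $\delta_{\mathcal{A}^\mathcal{I}}(\mathcal{K})=1$ and $\{x\}_{\mathcal{K}}$ strongly Cauchy. For $t>0$, strong Cauchyness of the subsequence yields $r_0$ with $\mathcal{F}_{x_{k_i}x_{k_{r_0}}}(t)>1-t$ for all $i\geq r_0$; putting $k_0=k_{r_0}$, the set $\{k\in\mathbb{N}:\mathcal{F}_{x_kx_{k_0}}(t)\leq 1-t\}$ is contained in $(\mathbb{N}\setminus\mathcal{K})\cup\{k_1,\dots,k_{r_0-1}\}$, a union of an $\mathcal{A}^\mathcal{I}$-density-zero set and a finite set, hence again of $\mathcal{A}^\mathcal{I}$-density zero, giving strong $\mathcal{A}^\mathcal{I}$-statistical Cauchyness. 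I expect the main obstacle to be the forward implication: unlike the convergence case there is no single center, so the disjointification must be arranged so that partial unions of the $G_j$ recover those of the $A_j$, and the lack of a true triangle inequality on $X$ must be absorbed by the vicinity composition of Theorem \ref{thm23}.
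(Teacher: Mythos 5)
Your proof is correct, and in both directions it shares the paper's overall skeleton --- a decomposition into countably many $\mathcal{A}^\mathcal{I}$-density-zero sets, an appeal to AP$\mathcal{A}^\mathcal{I}$O, and the finite-symmetric-difference tail argument in the forward direction; your converse (bad set contained in the union of $\mathbb{N}\setminus\mathcal{K}$ and a finite set) is in substance identical to the paper's. The genuine difference is in the forward implication, and it works in your favour. The paper transplants the proof of Theorem \ref{thm34} almost verbatim: it fixes one reference term $x_{k_0}$ and defines disjoint annuli $G_j$ around that single center, even though in the Cauchy setting $k_0$ depends on the threshold $t$; as written, nothing justifies $\delta_{\mathcal{A}^\mathcal{I}}(G_j)=0$ for the indices $j$ with $1/j<t$, and the closing step (all subsequence terms being $\gamma$-close to the fixed $x_{k_0}$ implies the subsequence is strongly Cauchy) silently uses the vicinity composition it never cites. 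You instead take a separate reference term $x_{p_j}$ for each threshold $1/j$, which is what Definition \ref{def32a} actually provides; the price is that your sets $A_j$ are no longer disjoint, so you must disjointify while preserving the partial-union identity $\bigcup_{i\le j}G_i=\bigcup_{i\le j}A_i$, and you must recombine at the end via Theorem \ref{thm23} with $x_{p_j}$ as the middle point. This is precisely the standard repair used to prove the analogous statement for $\mathcal{I}$-Cauchy versus $\mathcal{I}^*$-Cauchy sequences of reals, and it yields a complete argument where the paper's own proof has a gap; the only thing the paper's single-center shortcut buys is brevity.
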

\begin{proof}
Let $x=\{x_k\}_{k\in\mathbb{N}}$ be a sequence in $X$ such that $x$ is strongly $\mathcal{A}^\mathcal{I}$-statistically Cauchy sequence in $X$. Then for all $t>0$, there exists a natural number $k_0$ depending on $t$ such that the set $\{k\in\mathbb N:\mathcal{F}_{x_kx_{k_0}}(t)\leq 1-t\}$
has $\mathcal{A}^\mathcal{I}$-density zero. Then, $$\delta_{\mathcal{A}^\mathcal{I}}(\{k\in\mathbb{N}:d_L(\mathcal{F}_{x_kx_{k_0}},\varepsilon_0)\geq t\})=0.$$ 
Set
$G_1=\{k\in\mathbb{N}:d_L(\mathcal{F}_{x_kx_{k_0}},\varepsilon_0)\geq 1\}$, $G_j=\{k\in\mathbb{N}:\frac{1}{j-1}> d_L(\mathcal{F}_{x_kx_{k_0}},\varepsilon_0)\geq\frac{1}{j}\}$ for $ j \geq 2,~ j \in
\mathbb{N}$. Then $\{G_j\}_{j\in\mathbb N}$ is a sequence of mutually disjoint subsets of $\mathbb{N}$ with $\delta_{\mathcal{A}^\mathcal{I}}(G_j)=0$ for each $j\in\mathbb{N}$. Since $\delta_{\mathcal{A}^\mathcal{I}}$ satisfies the property $AP\mathcal{A}^\mathcal{I}O$ so there exists a sequence $\{H_j\}_{j\in\mathbb N}$ of subsets of $\mathbb{N}$ with $\left|G_j\Delta
H_j\right|<\infty$ and $\delta_{\mathcal{A}^\mathcal{I}}(H=\bigcup\limits_{j=1}^\infty H_j)=0$.
We claim that $\{x\}_ \mathcal{M}$ is a strongly Cauchy sequence in $X$ where $\mathcal{M}=\mathbb{N}\setminus H$. To prove our claim, let $\gamma>0$ be given. Choose $i\in \mathbb{N}$ so that $\frac{1}{i+1}<\gamma $. Then
$\{k\in\mathbb{N}: d_L(\mathcal{F}_{x_kx_{k_0}},\varepsilon_0)\geq \gamma\}\subset
\bigcup\limits_{j=1}^{i+1}G_j$. Since $\left|G_j\Delta
H_j\right|<\infty$ for all $j=1,2,...,i+1$, so there exists $n'\in\mathbb{N}$
such that
$\bigcup\limits_{j=1}^{i+1}G_j\cap(n',\infty)=\bigcup\limits_{j=1}^{i+1}H_j\cap(n',\infty)$.
Now if $k\notin H$, $k>n'$ then $k\notin
\bigcup\limits_{j=1}^{i+1}H_j$. And consequently, $k\notin
\bigcup\limits_{j=1}^{i+1}G_j$, which implies
$d_L(\mathcal{F}_{x_kx_{k_0}},\varepsilon_0)<\gamma$. Therefore, $x$ is strongly $\mathcal{A}^{\mathcal{I}^*}$-statistically Cauchy sequence in $X$.

Conversely, let $x$ be strongly $\mathcal{A}^{\mathcal{I}^*}$-statistically Cauchy sequence in $X$. Then there exists a subset $\mathcal{M}=\{q_1<q_2<...\}$ of $\mathbb{N}$ such
that $\delta_{\mathcal{A}^\mathcal{I}}(\mathcal{M})=1$ and $\{x\}_ \mathcal{M}$ is a strongly Cauchy sequence in $X$. Then for each $t>0$, there exists a natural number $k_0$ depending on $t$ such that 
$$ \mathcal{F}_{x_{q_n}x_{q_{k_0}}}(t)>1-t, \hspace{1in} \forall~ n\geq k_0,$$
i.e.,
$$d_L(\mathcal{F}_{x_{q_n}x_{q_{k_0}}},\varepsilon_0)<t, \hspace{1in} \forall~ n\geq k_0.$$
Let $E_t=\{n\in
\mathbb{N}:d_L(\mathcal{F}_{x_{q_n}x_{q_{k_0}}},\varepsilon_0)\geq
t\}$. Then $E_t\subset\mathbb{N} \setminus \{q_{ _{k_0+1}},q_{
_{k_0+2}},...\}$. Now $\delta_{\mathcal{A}^\mathcal{I}}(\mathbb{N} \setminus \{q_{
_{k_0+1}}, q_{ _{k_0+2}},...\}) = 0$ and so $\delta_{\mathcal{A}^\mathcal{I}}(E_t)=0$.

 Hence $x$ is strongly $\mathcal{A}^\mathcal{I}$-statistically Cauchy sequence in $X$.
\end{proof}

\begin{thm}\label{thm37}
Let $(X,\mathcal{F},\tau)$ be a PM space and $x=\{x_k\}_{k\in\mathbb{N}}$ be a sequence in $X$. If $x$ is strongly $\mathcal{A}^\mathcal{I}$-statistically convergent, then $x$ is strongly $\mathcal{A}^\mathcal{I}$-statistically Cauchy. 
\end{thm}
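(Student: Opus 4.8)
The plan is to reduce strong $\mathcal{A}^\mathcal{I}$-statistical Cauchyness to the given strong $\mathcal{A}^\mathcal{I}$-statistical convergence by means of the triangle-function continuity, together with the fact that the density-zero sets form an ideal. Suppose $\mathcal{I}\mbox{-}st_\mathcal{A}^{\mathcal{F}}\mbox{-}\lim\limits_{k\rightarrow\infty}x_k=\mathcal{L}$ and fix $t>0$. First I would invoke Theorem \ref{thm23} (continuity of $\tau$) to produce $\alpha>0$ such that $d_L(\mathcal{F}_{pr},\varepsilon_0)<t$ whenever $d_L(\mathcal{F}_{pq},\varepsilon_0)<\alpha$ and $d_L(\mathcal{F}_{qr},\varepsilon_0)<\alpha$. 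This $\alpha$ is the quantitative substitute for the classical ``$\tfrac{t}{2}+\tfrac{t}{2}$'' triangle split.

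Next, by Remark \ref{rem31}, strong $\mathcal{A}^\mathcal{I}$-statistical convergence at level $\alpha$ gives $\delta_{\mathcal{A}^\mathcal{I}}(A)=0$, where $A=\{k\in\mathbb{N}:d_L(\mathcal{F}_{x_k\mathcal{L}},\varepsilon_0)\geq\alpha\}$. By the complementation property of $\mathcal{A}^\mathcal{I}$-density we get $\delta_{\mathcal{A}^\mathcal{I}}(A^c)=1$, so $A^c\neq\emptyset$ (otherwise $A=\mathbb{N}$ would have density $1$, not $0$). I would then choose any $k_0\in A^c$, so that $d_L(\mathcal{F}_{x_{k_0}\mathcal{L}},\varepsilon_0)<\alpha$; this index $k_0$ serves as the anchor required in Definition \ref{def32a}.

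The key step is the inclusion $B\subset A$, where $B=\{k\in\mathbb{N}:d_L(\mathcal{F}_{x_kx_{k_0}},\varepsilon_0)\geq t\}$. Indeed, if $k\notin A$ then $d_L(\mathcal{F}_{x_k\mathcal{L}},\varepsilon_0)<\alpha$, while $d_L(\mathcal{F}_{\mathcal{L}x_{k_0}},\varepsilon_0)=d_L(\mathcal{F}_{x_{k_0}\mathcal{L}},\varepsilon_0)<\alpha$ by (P-3); the choice of $\alpha$ then forces $d_L(\mathcal{F}_{x_kx_{k_0}},\varepsilon_0)<t$, i.e. $k\notin B$. Hence $B\subset A$. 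Since $\delta_{\mathcal{A}^\mathcal{I}}(A)=0$ and the density-zero sets constitute the ideal $\mathcal{J}(\mathcal{A}^\mathcal{I})$ (so they are closed under passing to subsets), it follows that $\delta_{\mathcal{A}^\mathcal{I}}(B)=0$. Rewriting $B$ through Theorem \ref{thm21} as $\{k\in\mathbb{N}:\mathcal{F}_{x_kx_{k_0}}(t)\leq 1-t\}$ yields exactly the condition in Definition \ref{def32a}, and since $t>0$ was arbitrary the proof is complete.

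I expect the only genuine subtlety to be the legitimacy of selecting the anchor index $k_0$: this rests on $A^c$ having density one and therefore being non-empty, which uses $\delta_{\mathcal{A}^\mathcal{I}}(\mathbb{N})=1$. Everything else is the standard metric triangle argument transported into the $d_L$ metric on $\mathcal{D}^+$, combined with the ideal structure of the density-zero sets; notably, no appeal to the property AP$\mathcal{A}^\mathcal{I}$O is needed here, in contrast to Theorems \ref{thm34} and \ref{thm36}.
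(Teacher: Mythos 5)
Your proposal is correct. It differs from the paper only in presentation: the paper disposes of Theorem \ref{thm37} with a one-line citation, invoking [Theorem 3.5, \c{S}en\c{c}imen--Pehlivan \cite{Se2}] (strong $\mathcal{I}$-convergence implies strong $\mathcal{I}$-Cauchyness) applied to the ideal $\mathcal{J}(\mathcal{A}^\mathcal{I})$, whereas you have written out, in the specific setting of $\mathcal{A}^\mathcal{I}$-density, exactly the argument that underlies that cited result: the vicinity/triangle-function step from Theorem \ref{thm23} producing $\alpha$, the selection of an anchor index $k_0$ from the density-one set $A^c$ (which is legitimate, since $A^c=\emptyset$ would force $\delta_{\mathcal{A}^\mathcal{I}}(A)=\delta_{\mathcal{A}^\mathcal{I}}(\mathbb{N})=1\neq 0$), the inclusion $B\subset A$, and the downward-closedness of the ideal $\mathcal{J}(\mathcal{A}^\mathcal{I})$ to conclude $\delta_{\mathcal{A}^\mathcal{I}}(B)=0$. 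What your version buys is self-containedness and transparency: it makes visible that the only structural inputs are the continuity of $\tau$ and the fact that $\mathcal{A}^\mathcal{I}$-density-zero sets form an ideal, and it confirms explicitly (as the paper's hypotheses already suggest) that the property AP$\mathcal{A}^\mathcal{I}$O plays no role here, unlike in Theorems \ref{thm34} and \ref{thm36}. What the paper's route buys is brevity and the observation that the statement is a pure specialization of an existing ideal-convergence theorem, with no new argument required.
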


\begin{proof}
The proof directly follows from [Theorem 3.5 \cite{Se2}], by taking the ideal
$ \mathcal{J}(\mathcal{A}^\mathcal{I})$.
\end{proof}

\begin{cor}\label{cor31}
Let $(X,\mathcal{F},\tau)$ be a PM space, $x=\{x_k\}_{k\in\mathbb{N}}$ be a sequence in $X$ and $\mathcal{I}$ be an admissible ideal in $\mathbb N$ such that $\delta_{\mathcal{A}^\mathcal{I}}$ has the property AP$\mathcal{A}^\mathcal{I}$O. If $x$ is strongly $\mathcal{A}^\mathcal{I}$-statistically convergent, then $x$ has a strongly Cauchy subsequence in $X$. 
\end{cor}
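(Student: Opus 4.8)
The plan is to chain together the results already proved in this section. The statement Corollary~\ref{cor31} combines the hypotheses of Theorem~\ref{thm36} (an admissible ideal $\mathcal{I}$ with $\delta_{\mathcal{A}^\mathcal{I}}$ satisfying AP$\mathcal{A}^\mathcal{I}$O) with the assumption that $x$ is strongly $\mathcal{A}^\mathcal{I}$-statistically convergent, and asks us to produce a genuinely strongly Cauchy subsequence. First I would invoke Theorem~\ref{thm37}: since $x$ is strongly $\mathcal{A}^\mathcal{I}$-statistically convergent, it is strongly $\mathcal{A}^\mathcal{I}$-statistically Cauchy. Next, because the AP$\mathcal{A}^\mathcal{I}$O hypothesis is in force, Theorem~\ref{thm36} applies and upgrades this to strong $\mathcal{A}^{\mathcal{I}^*}$-statistical Cauchyness.

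The final step unwinds the definition of strong $\mathcal{A}^{\mathcal{I}^*}$-statistical Cauchyness. By Definition~\ref{def35}, there exists a set $\mathcal{K}=\{k_1<k_2<\cdots\}\subset\mathbb{N}$ with $\delta_{\mathcal{A}^\mathcal{I}}(\mathcal{K})=1$ such that the subsequence $\{x\}_{\mathcal{K}}$ is strongly Cauchy in $X$. Since $\delta_{\mathcal{A}^\mathcal{I}}(\mathbb{N})=1$ while, in particular, $\mathcal{K}\neq\emptyset$ is an infinite set (its $\mathcal{A}^\mathcal{I}$-density being $1$, it cannot be finite, as a finite set has density $0$ by property (ii) of the density listed in the introduction), the subsequence $\{x\}_{\mathcal{K}}$ is a bona fide infinite subsequence of $x$. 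This subsequence is exactly the strongly Cauchy subsequence whose existence is asserted, completing the proof.

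The main obstacle, such as it is, is not analytic but organizational: one must confirm that the index set $\mathcal{K}$ furnished by Definition~\ref{def35} is infinite, so that $\{x\}_{\mathcal{K}}$ deserves to be called a subsequence rather than a finite tuple. This follows immediately from $\delta_{\mathcal{A}^\mathcal{I}}(\mathcal{K})=1$ together with the density properties recorded after the definition of $\mathcal{A}^\mathcal{I}$-density, so no real difficulty arises. The proof is therefore a short two-line citation: apply Theorem~\ref{thm37}, then Theorem~\ref{thm36}, and read off the strongly Cauchy subsequence from Definition~\ref{def35}.
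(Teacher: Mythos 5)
Your proposal is correct and is essentially the paper's own proof: the paper likewise derives the corollary by combining Theorem~\ref{thm37} (strong $\mathcal{A}^\mathcal{I}$-statistical convergence implies strong $\mathcal{A}^\mathcal{I}$-statistical Cauchyness) with Theorem~\ref{thm36} (the AP$\mathcal{A}^\mathcal{I}$O equivalence with strong $\mathcal{A}^{\mathcal{I}^*}$-statistical Cauchyness) and then reading off the strongly Cauchy subsequence from Definition~\ref{def35}. Your extra check that the index set $\mathcal{K}$ is infinite is a harmless refinement the paper leaves implicit.
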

\begin{proof}
Directly follows from Theorem \ref{thm36} and Theorem \ref{thm37}.
\end{proof}
\begin{thm}\label{thm38}
Let $(X,\mathcal{F},\tau)$ be a PM space and $x=\{x_k\}_{k\in\mathbb{N}}$ be a sequence in $X$. If the sequence $x$ is strongly $\mathcal{A}^\mathcal{I}$-statistically Cauchy, then for each $t>0$, there is a set $\mathcal{P}_t\subset \mathbb{N}$ with $\delta_{\mathcal{A}^\mathcal{I}}(\mathcal{P}_t)=0$ such that $\mathcal{F}_{x_mx_j}(t)>1-t$ for any $m,j\notin \mathcal{P}_t$.
\end{thm}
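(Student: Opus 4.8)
The plan is to exploit the continuity of the triangle function $\tau$, as captured by Theorem \ref{thm23} and its restatement, to convert the one-point Cauchy estimate of Definition \ref{def32a} into a pairwise estimate. This is the PM-space analogue of the implication $(1)\Rightarrow(2)$ of Lemma \ref{lem31}, with the triangle-function inequality playing the role of the metric triangle inequality.

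First, given $t>0$, I would invoke the restatement of Theorem \ref{thm23} to produce an $\alpha>0$ such that $d_L(\mathcal{F}_{pr},\varepsilon_0)<t$ holds whenever $d_L(\mathcal{F}_{pq},\varepsilon_0)<\alpha$ and $d_L(\mathcal{F}_{qr},\varepsilon_0)<\alpha$. Next, since $x$ is strongly $\mathcal{A}^\mathcal{I}$-statistically Cauchy, applying Definition \ref{def32a} to this $\alpha$ (rather than to $t$ itself) yields a natural number $k_0$, depending on $\alpha$ and hence on $t$, with $\delta_{\mathcal{A}^\mathcal{I}}(\{k\in\mathbb{N}:\mathcal{F}_{x_kx_{k_0}}(\alpha)\leq 1-\alpha\})=0$. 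By Theorem \ref{thm21} this set coincides with $\{k\in\mathbb{N}:d_L(\mathcal{F}_{x_kx_{k_0}},\varepsilon_0)\geq\alpha\}$, which I would designate as the required $\mathcal{P}_t$; by construction $\delta_{\mathcal{A}^\mathcal{I}}(\mathcal{P}_t)=0$.

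Finally, for any pair $m,j\notin\mathcal{P}_t$ one has $d_L(\mathcal{F}_{x_mx_{k_0}},\varepsilon_0)<\alpha$ and $d_L(\mathcal{F}_{x_jx_{k_0}},\varepsilon_0)<\alpha$, so taking $p=x_m$, $q=x_{k_0}$, $r=x_j$ in the splitting above gives $d_L(\mathcal{F}_{x_mx_j},\varepsilon_0)<t$; a second appeal to Theorem \ref{thm21} then delivers $\mathcal{F}_{x_mx_j}(t)>1-t$, as claimed. The argument is otherwise routine, and the only delicate point is precisely the choice to apply the Cauchy hypothesis at the smaller tolerance $\alpha$, so that the triangle-function step can absorb the two $\alpha$-bounds into the single target bound at level $t$.
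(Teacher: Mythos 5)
Your proof is correct and follows essentially the same route as the paper's: apply the vicinity/triangle-function result (Theorem \ref{thm23}) to obtain a smaller tolerance $\alpha$, invoke the strong $\mathcal{A}^\mathcal{I}$-statistical Cauchy condition at level $\alpha$ to get $k_0$ and the null set $\mathcal{P}_t$, and then combine the two bounds through $x_{k_0}$. The only cosmetic difference is that you phrase the triangle step via the $d_L$-distance formulation (using Theorem \ref{thm21}) while the paper works directly with the inequalities $\mathcal{F}_{x_mx_{k_0}}(\gamma)>1-\gamma$; these are equivalent.
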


\begin{proof}
Let $x$ be strongly $\mathcal{A}^\mathcal{I}$-statistically Cauchy. Let $t>0$. Then, there is a $\gamma=\gamma(t)>0$ such that,
$$\mathcal{F}_{lr}(t)>1-t ~\text{whenever}~ \mathcal{F}_{lj}(\gamma)>1-\gamma ~\text{and}~ \mathcal{F}_{jr}(\gamma)>1-\gamma.$$

As $x$ is strongly $\mathcal{A}^\mathcal{I}$-statistically Cauchy, so there is an $k_0=k_0(\gamma)\in\mathbb{N}$ such that 
$$\delta_{\mathcal{A}^\mathcal{I}}(\{k\in \mathbb{N}:\mathcal{F}_{x_kx_{k_0}}(\gamma)\leq 1-\gamma\})=0.$$
Let $\mathcal{P}_t=\{m\in \mathbb{N}:\mathcal{F}_{x_mx_{k_0}}(\gamma)\leq 1-\gamma\}$. Then $\delta_{\mathcal{A}^\mathcal{I}}(\mathcal{P}_t)=0$ and $\mathcal{F}_{x_mx_{k_0}}(\gamma)>1-\gamma$ and $\mathcal{F}_{x_jx_{k_0}}(\gamma)>1-\gamma$ for $m,j\notin \mathcal{P}_t$.
Hence for every $t>0$, there is a set $\mathcal{P}_t\subset\mathbb{N}$ with $\delta_{\mathcal{A}^\mathcal{I}}(\mathcal{P}_t)=0$ such that $\mathcal{F}_{x_mx_j}(t)>1-t$ for every $m,j\notin \mathcal{P}_t$.
\end{proof}

\begin{cor}\label{cor72}
Let $(X,\mathcal{F},\tau)$ be a PM space, $x=\{x_k\}_{k\in\mathbb{N}}$ be a sequence in $X$. If $x$ is strongly $\mathcal{A}^\mathcal{I}$-statistically Cauchy, then for each $t>0$, there is a set $\mathcal{Q}_t\subset \mathbb{N}$ with $\delta_{\mathcal{A}^\mathcal{I}}(\mathcal{Q}_t)=1$ such that $\mathcal{F}_{x_mx_j}(t)>1-t$ for any $m,j\in \mathcal{Q}_t$.
\end{cor}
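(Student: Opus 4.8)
The plan is to derive Corollary \ref{cor72} directly from Theorem \ref{thm38}, since the two statements are complementary set-theoretic reformulations of the same Cauchy condition. Theorem \ref{thm38} already guarantees, for each $t>0$, a set $\mathcal{P}_t\subset\mathbb{N}$ with $\delta_{\mathcal{A}^\mathcal{I}}(\mathcal{P}_t)=0$ such that $\mathcal{F}_{x_mx_j}(t)>1-t$ for all $m,j\notin\mathcal{P}_t$. The obvious move is to take the complement $\mathcal{Q}_t=\mathbb{N}\setminus\mathcal{P}_t=\mathcal{P}_t^c$.

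First I would apply Theorem \ref{thm38} to the given strongly $\mathcal{A}^\mathcal{I}$-statistically Cauchy sequence $x$ to obtain, for each fixed $t>0$, the set $\mathcal{P}_t$ with $\delta_{\mathcal{A}^\mathcal{I}}(\mathcal{P}_t)=0$ enjoying the stated vicinity property. Next I would set $\mathcal{Q}_t=\mathcal{P}_t^c$ and invoke property (iv) of the $\mathcal{A}^\mathcal{I}$-density list in Section \ref{sec1}, namely $\delta_{\mathcal{A}^\mathcal{I}}(\mathcal{K}^c)=1-\delta_{\mathcal{A}^\mathcal{I}}(\mathcal{K})$, to conclude $\delta_{\mathcal{A}^\mathcal{I}}(\mathcal{Q}_t)=1-0=1$. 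Finally, observing that $m,j\in\mathcal{Q}_t$ is exactly the condition $m,j\notin\mathcal{P}_t$, the inequality $\mathcal{F}_{x_mx_j}(t)>1-t$ for all $m,j\in\mathcal{Q}_t$ follows immediately from the corresponding conclusion of Theorem \ref{thm38}.

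There is essentially no obstacle here; the only point requiring the briefest care is ensuring that the $\mathcal{A}^\mathcal{I}$-density of $\mathcal{P}_t$ genuinely exists so that property (iv) may be applied, but this is automatic because $\delta_{\mathcal{A}^\mathcal{I}}(\mathcal{P}_t)=0$ is part of the conclusion of Theorem \ref{thm38}. Thus the corollary is a one-line consequence obtained by passing to complements and reading off the density via the complementation rule.
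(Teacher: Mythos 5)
Your proposal is correct and matches the paper's intent exactly: the paper states this corollary immediately after Theorem \ref{thm38} with no separate proof, precisely because it follows by setting $\mathcal{Q}_t=\mathbb{N}\setminus\mathcal{P}_t$ and applying the complementation rule $\delta_{\mathcal{A}^\mathcal{I}}(\mathcal{K}^c)=1-\delta_{\mathcal{A}^\mathcal{I}}(\mathcal{K})$. Your added remark that the density of $\mathcal{P}_t$ exists (so the rule applies) is a sensible, if minor, point of care.
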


\begin{thm}\label{thm39}
Let $(X,\mathcal{F},\tau)$ be a PM space, $x=\{x_k\}_{k\in\mathbb{N}}$, $y=\{y_k\}_{k\in\mathbb{N}}$ be two strongly $\mathcal{A}^\mathcal{I}$-statistically Cauchy sequences in $X$. Then $\{\mathcal{F}_{{x_k}{y_k}}\}_{k\in\mathbb{N}}$ is an $\mathcal{A}^\mathcal{I}$-statistically Cauchy sequence in $(\mathcal{D}^+,d_L)$.
\end{thm}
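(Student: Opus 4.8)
The plan is to reduce the claim about the image sequence $\{\mathcal{F}_{x_ky_k}\}_{k\in\mathbb{N}}$ in the metric space $(\mathcal{D}^+,d_L)$ to the two given Cauchy hypotheses in $(X,\mathcal{F},\tau)$. First I would fix $\gamma>0$ and use the continuity of $\tau$, via Theorem \ref{thm23} (and its $d_L$-reformulation immediately following it), to extract an $\alpha>0$ with the triangle-type property that $d_L(\mathcal{F}_{pr},\varepsilon_0)<\alpha$ together with $d_L(\mathcal{F}_{rs},\varepsilon_0)<\alpha$ forces $d_L(\mathcal{F}_{ps},\varepsilon_0)<\tfrac{\gamma}{2}$; this is the mechanism that converts closeness of points in $X$ into closeness of the associated distribution functions. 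The goal in $\mathcal{D}^+$ will be to estimate $d_L(\mathcal{F}_{x_ky_k},\mathcal{F}_{x_jy_j})$ for indices outside a set of $\mathcal{A}^\mathcal{I}$-density zero.

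Next I would invoke Theorem \ref{thm38} applied separately to $x$ and to $y$ at the level $\alpha$: this yields sets $\mathcal{P}^x_\alpha,\mathcal{P}^y_\alpha\subset\mathbb{N}$, each of $\mathcal{A}^\mathcal{I}$-density zero, such that $\mathcal{F}_{x_mx_j}(\alpha)>1-\alpha$ for all $m,j\notin\mathcal{P}^x_\alpha$ and $\mathcal{F}_{y_my_j}(\alpha)>1-\alpha$ for all $m,j\notin\mathcal{P}^y_\alpha$. Setting $\mathcal{P}=\mathcal{P}^x_\alpha\cup\mathcal{P}^y_\alpha$, property (v) of $\delta_{\mathcal{A}^\mathcal{I}}$ gives $\delta_{\mathcal{A}^\mathcal{I}}(\mathcal{P})=0$, and for all $m,j\notin\mathcal{P}$ both $x_m,x_j$ and $y_m,y_j$ are simultaneously $\alpha$-close in the sense above. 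Picking any fixed reference index $j_0\notin\mathcal{P}$ (which exists since $\delta_{\mathcal{A}^\mathcal{I}}(\mathcal{P}^c)=1$ by property (iv)), I can then regard $\{\mathcal{F}_{x_ky_k}\}$ restricted to $\mathbb{N}\setminus\mathcal{P}$ as the candidate for a Cauchy tail, and aim to show $d_L(\mathcal{F}_{x_ky_k},\mathcal{F}_{x_{j_0}y_{j_0}})<\gamma$ for all such $k$.

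The main obstacle, and the genuinely probabilistic-metric part of the argument, is bounding $d_L(\mathcal{F}_{x_my_m},\mathcal{F}_{x_jy_j})$ from the four pointwise closeness facts $x_m\sim x_j$, $y_m\sim y_j$. Here I would use the reverse-triangle/Lipschitz behaviour of $d_L$ on $\mathcal{D}^+$ together with the PM-space axiom (P-4), $\mathcal{F}(a,c)\geq\tau(\mathcal{F}(a,b),\mathcal{F}(b,c))$, to control how $\mathcal{F}_{x_my_m}$ and $\mathcal{F}_{x_jy_j}$ differ: intuitively, replacing $x_j$ by $x_m$ perturbs the distance distribution function by at most the $\alpha$-vicinity tolerance, and likewise for replacing $y_j$ by $y_m$, so that two applications of the continuity-of-$\tau$ estimate (one for each coordinate, each costing $\tfrac{\gamma}{2}$) yield $d_L(\mathcal{F}_{x_my_m},\mathcal{F}_{x_jy_j})<\gamma$. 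I expect this to be the delicate step because it requires carefully tracking the vicinity structure through the triangle function rather than a single metric triangle inequality; the cleanest route may be to phrase everything through the $d_L$-version of Theorem \ref{thm23} stated after it, chaining $\mathcal{F}_{x_my_m}\to\mathcal{F}_{x_mx_j}\to\mathcal{F}_{x_jy_j}$-type comparisons.

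Finally, combining these pieces, for every $\gamma>0$ I obtain a set $\mathcal{P}=\mathcal{P}(\gamma)$ with $\delta_{\mathcal{A}^\mathcal{I}}(\mathcal{P})=0$ and a reference index $j_0\notin\mathcal{P}$ such that $d_L(\mathcal{F}_{x_ky_k},\mathcal{F}_{x_{j_0}y_{j_0}})<\gamma$ for all $k\notin\mathcal{P}$, which is exactly the statement that $\{\mathcal{F}_{x_ky_k}\}_{k\in\mathbb{N}}$ is $\mathcal{A}^\mathcal{I}$-statistically Cauchy in $(\mathcal{D}^+,d_L)$ in the sense of Definition \ref{def32a} (read in the metric space $(\mathcal{D}^+,d_L)$, where the strong $t$-neighbourhood condition $\mathcal{F}_{pq}(t)>1-t$ is replaced by $d_L(\cdot,\cdot)<t$). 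I would close by noting that the membership $\mathcal{F}_{x_ky_k}\in\mathcal{D}^+$ for every $k$ is automatic from Definition \ref{def25}, so the statement is well posed in the compact metric space $(\mathcal{D}^+,d_L)$.
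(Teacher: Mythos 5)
Your overall architecture coincides with the paper's: both arguments first use Theorem \ref{thm38} (the paper actually invokes its complementary form, Corollary \ref{cor72}) to produce, for each tolerance, an index set of full $\mathcal{A}^\mathcal{I}$-density on which all pairs $x_m,x_j$ and all pairs $y_m,y_j$ are simultaneously close, and both then reduce the claim to the pair formulation of $\mathcal{A}^\mathcal{I}$-statistical Cauchyness in $(\mathcal{D}^+,d_L)$ (the paper via Lemma \ref{lem31}; you by fixing a reference index $j_0$ outside the exceptional set, which is the same implication proved inline). The density bookkeeping in your first, second and fourth paragraphs is correct.

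The gap is in your third paragraph, exactly the step you flag as delicate. What is needed there is: if $d_L(\mathcal{F}_{x_mx_j},\varepsilon_0)<\alpha$ and $d_L(\mathcal{F}_{y_my_j},\varepsilon_0)<\alpha$, then $d_L(\mathcal{F}_{x_my_m},\mathcal{F}_{x_jy_j})<\gamma$ --- that is, the uniform continuity of the map $\mathcal{F}:X\times X\rightarrow(\mathcal{D}^+,d_L)$. The tool you lean on, Theorem \ref{thm23} and its $d_L$-reformulation, cannot deliver this: that theorem only produces estimates of the form ``$\mathcal{F}_{ps}$ is close to $\varepsilon_0$'', i.e., it compares a \emph{composed} distance with the identity $\varepsilon_0$, whereas here neither $\mathcal{F}_{x_my_m}$ nor $\mathcal{F}_{x_jy_j}$ need be anywhere near $\varepsilon_0$ (the sequences $x$ and $y$ may remain far apart), so no chain of such vicinity estimates reaches the required two-sided comparison. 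The paper closes this step by citing the known fact (Schweizer--Sklar \cite{Sh5}, valid since $\tau$ is assumed continuous throughout) that $\mathcal{F}$ is uniformly continuous, which is precisely the statement above; this is why its proof ends with ``as $\mathcal{F}$ is uniformly continuous'' followed by Lemma \ref{lem31}. If you prefer to prove it rather than cite it, the correct route is: $\tau$, being continuous on the compact space $(\mathcal{D}^+,d_L)\times(\mathcal{D}^+,d_L)$, is uniformly continuous; apply (P-4) twice to get $\mathcal{F}_{x_jy_j}\geq\tau\bigl(\mathcal{F}_{x_jx_m},\tau(\mathcal{F}_{x_my_m},\mathcal{F}_{y_my_j})\bigr)$ and the symmetric inequality with $(m,j)$ exchanged; use uniform continuity of $\tau$ together with $\tau(F,\varepsilon_0)=F$ to see that each right-hand side is $d_L$-close to $\mathcal{F}_{x_my_m}$, respectively $\mathcal{F}_{x_jy_j}$; and finally combine the two resulting one-sided pointwise bounds through the definition of $d_L$ to get the two-sided estimate. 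With this step repaired (or replaced by the citation), your proof is correct and is essentially the paper's proof.
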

\begin{proof}
As $x$ and $y$ are strongly $\mathcal{A}^\mathcal{I}$-statistically Cauchy sequences, so by corollary \ref{cor72}, for every $\gamma>0$ there are $\mathcal{U}_\gamma, \mathcal{V}_\gamma\subset\mathbb{N}$ with $\delta_{\mathcal{A}^\mathcal{I}}(\mathcal{U}_\gamma)=\delta_{\mathcal{A}^\mathcal{I}}(\mathcal{V}_\gamma)=1$ so that $\mathcal{F}_{x_mx_j}(\gamma)>1-\gamma$ holds for any $m,j\in \mathcal{U}_\gamma$ and $\mathcal{F}_{y_ny_z}(\gamma)>1-\gamma$ holds for any $n,z\in \mathcal{V}_\gamma$. Let $\mathcal{W}_\gamma=\mathcal{U}_\gamma\cap \mathcal{V}_\gamma$. Then $\delta_{\mathcal{A}^\mathcal{I}}(\mathcal{W}_\gamma)=1$. So, for every $\gamma>0$, there is a set $\mathcal{W}_\gamma\subset\mathbb{N}$ with $\delta_{\mathcal{A}^\mathcal{I}}(\mathcal{W}_\gamma)=1$ so that $\mathcal{F}_{x_px_q}(\gamma)>1-\gamma$ and $\mathcal{F}_{y_py_q}(\gamma)>1-\gamma$ for any $p,q\in \mathcal{W}_\gamma$. Now let $t>0$. Then there exists a $\gamma(t)$ and hence a set $\mathcal{W}_\gamma=\mathcal{W}_t\subset\mathbb{N}$ with $\delta_{\mathcal{A}^\mathcal{I}}(\mathcal{W}_t)=1$ so that $d_L(\mathcal{F}_{x_py_p},\mathcal{F}_{x_qy_q})<t$ for any $p,q\in \mathcal{W}_t$, as $\mathcal{F}$ is uniformly continuous. Then the result follows from Lemma \ref{lem31}.
\end{proof}

\section{\textbf{Strong $\mathcal{A}^\mathcal{I}$-statistical limit points and strong $\mathcal{A}^\mathcal{I}$-statistical cluster points}}\label{sec4}
In this section following the works of \c{S}en\c{c}imen et al. \cite{Se2} and Malik et al. \cite{Pr1, Pr2} we discuss some basic properties of strong $\mathcal{A}^\mathcal{I}$-statistical cluster points of a sequence in a PM space including their interrelationship.

\begin{defn}\cite{Sh5, Se}\label{def41}
Let $(X,\mathcal{F},\tau)$ be a PM space and $x=\{x_k\}_{k\in\mathbb{N}}$ be a sequence in $X$. An element $\mathcal{L}\in X$ is called a strong limit point of $x$, if there is a subsequence of $x$ that strongly converges to $\mathcal{L}$.
\end{defn}
To denote the set of all strong limit points of any sequence $x$ in a PM space $(X,\mathcal{F},\tau)$ we use the notation $L_x^\mathcal{F}$.

\begin{defn}\cite{Se2}\label{def42}
Let $(X,\mathcal{F},\tau)$ be a PM space and $\mathcal I$ be an
admissible ideal in $\mathbb{N}$ and $x=\{x_k\}_{k\in\mathbb{N}}$
be a sequence in $X$. An element $\zeta\in X$ is said to be a
strong $\mathcal{I}$-limit point of $x$, if there is a subset $Q=
\{q_1 < q_2 < ...\}$ of $\mathbb{N}$ such that $ Q \notin
\mathcal I$ and $\{x_{q_k}\}_{k \in \mathbb{N}}$ strongly
converges to $\zeta$.
\end{defn}

\begin{defn}\cite{Se2}\label{def43}
Let $(X,\mathcal{F},\tau)$ be a PM space, $\mathcal I$ be an
admissible ideal in $\mathbb{N}$ and $x=\{x_k\}_{k\in\mathbb{N}}$
be a sequence in $X$. An element $\eta\in X$ is said to be a strong
$\mathcal{I}$-cluster point of $x$, if for every $t>0$, the set
$\{k\in\mathbb N : x_k\in\mathcal{N}_\eta(t)\}) \notin \mathcal I$.
\end{defn}

\begin{note}\label{no41}
(i) If $\mathcal{I}=\mathcal{I}_{d}=\{\mathcal{K}\subset
\mathbb{N}: d(\mathcal{K})=0\}$, then in a PM space
the notions of strong $\mathcal {I}_{d}$-limit point and strong
$\mathcal {I}_{d}$-cluster point coincide with the notions of
strong statistical limit point \cite{Se} and strong statistical cluster point \cite{Se} respectively.

(ii) Let $\mathcal{I}$ be an admissible ideal in $\mathbb{N}$ then the notions of strong
$\mathcal{J}(\mathcal{A}^\mathcal{I})$-limit point and strong
$\mathcal{J}(\mathcal{A}^\mathcal{I})$-cluster point of sequences in a PM space
become the notions of strong $\mathcal{A}^\mathcal{I}$-statistical limit point and strong $\mathcal{A}^\mathcal{I}$-statistical cluster point respectively. Further, if $\mathcal{I}=\mathcal{I}_{fin}=\{\mathcal{K}\subset\mathbb{N}: \left|\mathcal{K}\right|<\infty\}$, then the notions of strong $\mathcal{J}(\mathcal{A}^\mathcal{I})$-limit point and strong $\mathcal{J}(\mathcal{A}^\mathcal{I})$-cluster point of sequences in a PM space coincide with strong $\mathcal{A}$-statistical limit point \cite{Pr3} and strong $\mathcal{A}$-statistical cluster point \cite{Pr3} respectively. 

(iii) If $\mathcal{A}$ be the Cesaro matrix $C_1$ and $\mathcal{I}$ is an admissible ideal,
then the notions of strong $\mathcal{J}({C_1}^\mathcal{I})$-limit point and
strong $\mathcal{J}({C_1}^\mathcal{I})$-cluster point of sequences in a PM space
become the notions of strong $\mathcal{I}$-statistical limit point and
strong $\mathcal{I}$-statistical cluster point respectively.
\end{note}

Let $(X,\mathcal{F},\tau)$ be a PM space, $x=\{x_k\}_{k\in\mathbb{N}}$ be a sequence in $X$. Let $\{x_{k_j}\}_{j\in\mathbb{N}}$ be a subsequence of $x$ and $\mathcal{K}=\{k_j\in\mathbb{N}:j\in\mathbb{N}\}$ then we denote $\{x_{k_j}\}_{j\in\mathbb{N}}$ by $\{x\}_\mathcal{K}$. Now, if $\delta_{\mathcal{A}^\mathcal{I}}(\mathcal{K}) = 0$, $\{x\}_\mathcal{K}$ is said to be an $\mathcal{A}^\mathcal{I}$-thin subsequence of $x$. On the other hand, $\{x\}_\mathcal{K}$ is said to be an $\mathcal{A}^\mathcal{I}$-nonthin subsequence of $x$, if $\mathcal{K}$ does not have $\mathcal{A}^\mathcal{I}$ density zero i.e., if either $\delta_{\mathcal{A}^\mathcal{I}}(\mathcal{K})$ is a positive number or, the $\mathcal{A}^\mathcal{I}$-density of $\mathcal{K}$ does not exist.

In view of Definition \ref{def42}, Definition \ref{def43} and Note \ref{no41}.(ii) we now restate the definitions of strong $\mathcal{A}^\mathcal{I}$-statistical limit point and strong $\mathcal{A}^\mathcal{I}$-statistical cluster point in a PM space.

\begin{defn}\cite{Se2}\label{def44}
Let $(X,\mathcal{F},\tau)$ be a PM space, $x=\{x_k\}_{k\in\mathbb{N}}$ be a sequence in $X$. An element $\zeta\in X$ is said to be a strong $\mathcal{A}^\mathcal{I}$-statistical limit point of $x$, if there is an $\mathcal{A}^\mathcal{I}$-nonthin subsequence of $x$ that strongly converges to $\zeta$. 
\end{defn}
To denote the set of all strong $\mathcal{A}^\mathcal{I}$-statistical limit points of any sequence $x=\{x_k\}_{k\in\mathbb{N}}$ in a PM space $(X,\mathcal{F},\tau)$ we use the notation $\Lambda_x^\mathcal{A}(\mathcal{I})^\mathcal{F}_{s}$.

\begin{defn}\cite{Se2}\label{def45}
Let $(X,\mathcal{F},\tau)$ be a PM space and $x=\{x_k\}_{k\in\mathbb{N}}$ be a sequence in $X$. An element $\nu\in X$ is said to be a strong $\mathcal{A}^\mathcal{I}$-statistical cluster point of $x$, if for every $t>0$, the set $\delta_{\mathcal{A}^\mathcal{I}}(\{k\in\mathbb N : \mathcal{F}_{x_k\nu}(t)>1-t\})$ does not equal to zero.
\end{defn}
To denote the set of all strong $\mathcal{A}^\mathcal{I}$-statistical cluster points of any sequence $x=\{x_k\}_{k\in\mathbb{N}}$ in a PM space $(X, \mathcal{F}, \tau)$ we use the notation $\Gamma_x^\mathcal{A}(\mathcal{I})^\mathcal{F}_{s}$.

\begin{thm}\label{thm41}
Let $(X,\mathcal{F},\tau)$ be a PM space and $x=\{x_k\}_{k\in\mathbb{N}}$ be a sequence in $X$. Then $\Lambda_x^\mathcal{A}(\mathcal{I})^\mathcal{F}_{s}\subset \Gamma_x^\mathcal{A}(\mathcal{I})^\mathcal{F}_{s}\subset L_x^\mathcal{F}$.
\end{thm}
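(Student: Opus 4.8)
The plan is to establish the chain by proving the two inclusions separately, relying in each case only on the definitions of strong $\mathcal{A}^\mathcal{I}$-statistical limit point (Definition \ref{def44}), strong $\mathcal{A}^\mathcal{I}$-statistical cluster point (Definition \ref{def45}), strong limit point (Definition \ref{def41}), and on the fact stated earlier that the $\mathcal{A}^\mathcal{I}$-density zero sets form an admissible ideal $\mathcal{J}(\mathcal{A}^\mathcal{I})$ in $\mathbb{N}$.

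For the first inclusion $\Lambda_x^\mathcal{A}(\mathcal{I})^\mathcal{F}_{s}\subset \Gamma_x^\mathcal{A}(\mathcal{I})^\mathcal{F}_{s}$, I would take any $\zeta\in \Lambda_x^\mathcal{A}(\mathcal{I})^\mathcal{F}_{s}$, so that there is an $\mathcal{A}^\mathcal{I}$-nonthin subsequence $\{x\}_\mathcal{K}$ with index set $\mathcal{K}$ (satisfying $\delta_{\mathcal{A}^\mathcal{I}}(\mathcal{K})\neq 0$, i.e. $\mathcal{K}\notin \mathcal{J}(\mathcal{A}^\mathcal{I})$) that strongly converges to $\zeta$. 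Fix $t>0$ and set $A(t)=\{k\in\mathbb{N}:\mathcal{F}_{x_k\zeta}(t)>1-t\}$. Strong convergence of $\{x\}_\mathcal{K}$ means that all but finitely many indices in $\mathcal{K}$ land in $\mathcal{N}_\zeta(t)$, so $\mathcal{K}\setminus A(t)$ is finite. The key step is then a contradiction argument through the ideal structure: if $\delta_{\mathcal{A}^\mathcal{I}}(A(t))=0$, then $A(t)\in \mathcal{J}(\mathcal{A}^\mathcal{I})$, hence its subset $\mathcal{K}\cap A(t)$ also lies in $\mathcal{J}(\mathcal{A}^\mathcal{I})$; since $\mathcal{K}\setminus A(t)$ is finite it too lies in $\mathcal{J}(\mathcal{A}^\mathcal{I})$ (the ideal is admissible), and therefore $\mathcal{K}=(\mathcal{K}\cap A(t))\cup(\mathcal{K}\setminus A(t))\in \mathcal{J}(\mathcal{A}^\mathcal{I})$, contradicting nonthinness. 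Thus $\delta_{\mathcal{A}^\mathcal{I}}(A(t))\neq 0$ for every $t>0$, which is exactly the condition for $\zeta\in \Gamma_x^\mathcal{A}(\mathcal{I})^\mathcal{F}_{s}$. I expect this inclusion to be the main obstacle, precisely because ``nonthin'' permits the density of $\mathcal{K}$ to fail to exist rather than merely be positive; phrasing the argument in terms of membership in $\mathcal{J}(\mathcal{A}^\mathcal{I})$ (closure under subsets and finite unions) rather than in terms of numerical density values is what lets me sidestep that subtlety cleanly.

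For the second inclusion $\Gamma_x^\mathcal{A}(\mathcal{I})^\mathcal{F}_{s}\subset L_x^\mathcal{F}$, I would take $\nu\in \Gamma_x^\mathcal{A}(\mathcal{I})^\mathcal{F}_{s}$ and carry out a standard nested diagonal construction. For each $n\in\mathbb{N}$ put $A_n=\{k\in\mathbb{N}:\mathcal{F}_{x_k\nu}(1/n)>1-1/n\}=\{k\in\mathbb{N}:x_k\in\mathcal{N}_\nu(1/n)\}$; by hypothesis $\delta_{\mathcal{A}^\mathcal{I}}(A_n)\neq 0$, and since finite sets belong to $\mathcal{J}(\mathcal{A}^\mathcal{I})$, each $A_n$ must be infinite. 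I would then inductively choose $k_1<k_2<\cdots$ with $k_n\in A_n$, which is possible because every $A_n$ is infinite.

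It remains to check that $\{x_{k_n}\}_{n\in\mathbb{N}}$ strongly converges to $\nu$, giving $\nu\in L_x^\mathcal{F}$. Here I would use the monotonicity of the strong neighbourhoods $\mathcal{N}_\nu(s)=\{b\in X:d_L(\mathcal{F}_{\nu b},\varepsilon_0)<s\}$, namely $\mathcal{N}_\nu(s_1)\subseteq \mathcal{N}_\nu(s_2)$ whenever $s_1\leq s_2$ (immediate from Theorem \ref{thm21}). Given $t>0$, pick $N$ with $1/N<t$; then for all $n\geq N$ we have $x_{k_n}\in\mathcal{N}_\nu(1/n)\subseteq \mathcal{N}_\nu(1/N)\subseteq \mathcal{N}_\nu(t)$, so the subsequence eventually stays inside every strong $t$-neighbourhood of $\nu$, which is strong convergence. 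Combining the two inclusions yields $\Lambda_x^\mathcal{A}(\mathcal{I})^\mathcal{F}_{s}\subset \Gamma_x^\mathcal{A}(\mathcal{I})^\mathcal{F}_{s}\subset L_x^\mathcal{F}$, as required.
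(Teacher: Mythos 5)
Your proof is correct, but it does not follow the paper's route: the paper gives no direct argument at all, simply noting that the statement follows from Theorem 4.1 of \c{S}en\c{c}imen and Pehlivan \cite{Se2} (the analogous chain for strong $\mathcal{I}$-limit points and strong $\mathcal{I}$-cluster points over an arbitrary admissible ideal) applied to the ideal $\mathcal{J}(\mathcal{A}^\mathcal{I})$ of $\mathcal{A}^\mathcal{I}$-density-zero sets. What you have done, in effect, is reprove that general result in the special case at hand. Your two key moves are exactly the right ones and are handled cleanly: for $\Lambda_x^\mathcal{A}(\mathcal{I})^\mathcal{F}_{s}\subset\Gamma_x^\mathcal{A}(\mathcal{I})^\mathcal{F}_{s}$, recasting ``nonthin'' as $\mathcal{K}\notin\mathcal{J}(\mathcal{A}^\mathcal{I})$ and using closure of the admissible ideal under subsets and finite unions correctly sidesteps the problem that $\delta_{\mathcal{A}^\mathcal{I}}(\mathcal{K})$ may fail to exist (an argument phrased in terms of numerical density values would indeed run aground there); for $\Gamma_x^\mathcal{A}(\mathcal{I})^\mathcal{F}_{s}\subset L_x^\mathcal{F}$, the observation that sets outside an admissible ideal must be infinite, together with the monotonicity $\mathcal{N}_\nu(s_1)\subseteq\mathcal{N}_\nu(s_2)$ for $s_1\leq s_2$ (via Theorem \ref{thm21}), makes the diagonal choice $k_1<k_2<\cdots$ with $k_n\in A_n$ legitimate and the strong convergence of $\{x_{k_n}\}_{n\in\mathbb{N}}$ immediate. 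The trade-off: the paper's citation is shorter and emphasizes that all such statements are instances of ideal-convergence theory for the single ideal $\mathcal{J}(\mathcal{A}^\mathcal{I})$, while your argument is self-contained and makes visible precisely which structural properties of $\mathcal{A}^\mathcal{I}$-density (admissibility, subset closure, finite-union closure) the theorem actually consumes.
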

\begin{proof}
The proof directly follows from [Theorem 4.1 \cite{Se2}], by taking the ideal
$ \mathcal{J}(\mathcal{A}^\mathcal{I})$.
\end{proof}

\begin{thm}\label{thm42}
Let $(X,\mathcal{F},\tau)$ be a PM space, $x=\{x_k\}_{k\in\mathbb{N}}$ be a sequence in $X$ and $\mathcal{I}$ be an ideal such that $\delta_{\mathcal{A}^\mathcal{I}}$ satisfies the property AP$\mathcal{A}^\mathcal{I}$O. If $\mathcal{I}\mbox{-}st_\mathcal{A}^{\mathcal{F}}\mbox{-}\lim\limits_{k\rightarrow \infty}x_k = \mu$, then $\Lambda_x^\mathcal{A}(\mathcal{I})^\mathcal{F}_{s}=\Gamma_x^\mathcal{A}(\mathcal{I})^\mathcal{F}_{s}=\{\mu\}$.
\end{thm}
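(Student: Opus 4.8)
The plan is to prove the double equality by establishing the sandwich $\{\mu\}\subseteq\Lambda_x^\mathcal{A}(\mathcal{I})^\mathcal{F}_{s}\subseteq\Gamma_x^\mathcal{A}(\mathcal{I})^\mathcal{F}_{s}\subseteq\{\mu\}$, where the middle inclusion is free from Theorem \ref{thm41}. Thus it suffices to check two things: that $\mu$ itself is a strong $\mathcal{A}^\mathcal{I}$-statistical limit point, and that no point other than $\mu$ can be a strong $\mathcal{A}^\mathcal{I}$-statistical cluster point.

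For $\mu\in\Lambda_x^\mathcal{A}(\mathcal{I})^\mathcal{F}_{s}$ I would invoke the property AP$\mathcal{A}^\mathcal{I}$O through Theorem \ref{thm34}: since $x$ is strongly $\mathcal{A}^\mathcal{I}$-statistically convergent to $\mu$, it is also strongly $\mathcal{A}^{\mathcal{I}^*}$-statistically convergent to $\mu$, so there is a set $\mathcal{K}$ with $\delta_{\mathcal{A}^\mathcal{I}}(\mathcal{K})=1$ for which $\{x\}_\mathcal{K}\xrightarrow{\mathcal{F}}\mu$. Because $\delta_{\mathcal{A}^\mathcal{I}}(\mathcal{K})=1\neq 0$, the subsequence $\{x\}_\mathcal{K}$ is $\mathcal{A}^\mathcal{I}$-nonthin, so $\mu\in\Lambda_x^\mathcal{A}(\mathcal{I})^\mathcal{F}_{s}$ by Definition \ref{def44}, and hence $\mu\in\Gamma_x^\mathcal{A}(\mathcal{I})^\mathcal{F}_{s}$ by Theorem \ref{thm41}.

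The heart of the argument is the reverse inclusion $\Gamma_x^\mathcal{A}(\mathcal{I})^\mathcal{F}_{s}\subseteq\{\mu\}$. Suppose $\nu$ is a strong $\mathcal{A}^\mathcal{I}$-statistical cluster point with $\nu\neq\mu$; then $\mathcal{F}_{\mu\nu}\neq\varepsilon_0$, so there is a $t>0$ with $d_L(\mathcal{F}_{\mu\nu},\varepsilon_0)=t$. Using continuity of $\tau$ (Theorem \ref{thm23}) I would fix $\gamma>0$ so that $d_L(\mathcal{F}_{pq},\varepsilon_0)<\gamma$ and $d_L(\mathcal{F}_{qr},\varepsilon_0)<\gamma$ force $d_L(\mathcal{F}_{pr},\varepsilon_0)<t$. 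Write $A=\{k:d_L(\mathcal{F}_{x_k\mu},\varepsilon_0)\geq\gamma\}$ and $B_\gamma=\{k:\mathcal{F}_{x_k\nu}(\gamma)>1-\gamma\}=\{k:d_L(\mathcal{F}_{x_k\nu},\varepsilon_0)<\gamma\}$ (the last equality from Theorem \ref{thm21}). Strong $\mathcal{A}^\mathcal{I}$-statistical convergence gives $\delta_{\mathcal{A}^\mathcal{I}}(A)=0$, while the cluster hypothesis at the value $\gamma$ gives $\delta_{\mathcal{A}^\mathcal{I}}(B_\gamma)\neq 0$. But if some $k$ lay in both $A^c$ and $B_\gamma$, the triangle choice applied to $p=\mu,\ q=x_k,\ r=\nu$ would yield $d_L(\mathcal{F}_{\mu\nu},\varepsilon_0)<t$, which is impossible; hence $B_\gamma\subseteq A$.

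Finally I would close the loop by the monotonicity of $\mathcal{A}^\mathcal{I}$-density: since $\mathcal{A}$ is non-negative, $B_\gamma\subseteq A$ gives $0\leq\sum_{m\in B_\gamma}a_{nm}\leq\sum_{m\in A}a_{nm}$ for every $n$, and as the upper sequence is $\mathcal{I}$-convergent to $0$, the ideal being closed under subsets forces $\delta_{\mathcal{A}^\mathcal{I}}(B_\gamma)=0$, contradicting $\delta_{\mathcal{A}^\mathcal{I}}(B_\gamma)\neq 0$. Thus $\nu=\mu$, completing the chain $\{\mu\}\subseteq\Lambda_x^\mathcal{A}(\mathcal{I})^\mathcal{F}_{s}\subseteq\Gamma_x^\mathcal{A}(\mathcal{I})^\mathcal{F}_{s}\subseteq\{\mu\}$. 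The delicate point I expect to be the last one: the cluster-point definition only says the density ``does not equal zero'', so one must argue from the squeeze that this density in fact \emph{exists} and equals zero, rather than merely bounding a presumed value.
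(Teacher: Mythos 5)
Your proposal is correct and follows essentially the same route as the paper's proof: the paper likewise obtains $\mu\in\Lambda_x^\mathcal{A}(\mathcal{I})^\mathcal{F}_{s}$ from the AP$\mathcal{A}^\mathcal{I}$O hypothesis (via Theorem \ref{thm35}, which rests on Theorem \ref{thm34}), rules out a second cluster point $\alpha\neq\mu$ by the same triangle-function argument showing the two neighborhood index sets are disjoint and hence the second has $\mathcal{A}^\mathcal{I}$-density zero, and then concludes with Theorem \ref{thm41}. The only cosmetic differences are your sandwich organization $\{\mu\}\subseteq\Lambda_x^\mathcal{A}(\mathcal{I})^\mathcal{F}_{s}\subseteq\Gamma_x^\mathcal{A}(\mathcal{I})^\mathcal{F}_{s}\subseteq\{\mu\}$ and your explicit squeeze justification that a subset of an $\mathcal{A}^\mathcal{I}$-density-zero set has density zero, a fact the paper absorbs into the statement that $\mathcal{J}(\mathcal{A}^\mathcal{I})$ is an ideal.
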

\begin{proof}
Let $\mathcal{I}\mbox{-}st_\mathcal{A}^{\mathcal{F}}\mbox{-}\lim\limits_{k\rightarrow \infty}x_k = \mu$. So for every $t>0$, $\delta_{\mathcal{A}^\mathcal{I}}(\{k\in\mathbb{N}:\mathcal{F}_{x_k\mu}(t)>1-t\})=1$. Therefore, $\mu\in\Gamma_x^\mathcal{A}(\mathcal{I})^\mathcal{F}_{s}$. Now assume that there exists at least one $\alpha\in\Gamma_x^\mathcal{A}(\mathcal{I})^\mathcal{F}_{s}$ such that $\alpha\neq\mu$. Then $\mathcal{F}_{\alpha\mu}\neq\varepsilon_0$. Then there is a $t_1>0$ such that $d_L(\mathcal{F}_{\alpha\mu},\varepsilon_0)=t_1$. Let $t>0$ be given such that $d_L(\mathcal{F}_{pq},\varepsilon_0)<t$ and
$d_L(\mathcal{F}_{qr},\varepsilon_0)<t $ imply that
$d_L(\mathcal{F}_{pr},\varepsilon_0)<t_1$. Now since  $\mu,\alpha\in\Gamma_x^\mathcal{A}(\mathcal{I})^\mathcal{F}_{s}$, for that $t>0$, $\delta_{\mathcal{A}^\mathcal{I}}(\mathcal{K})\neq 0$ and $\delta_{\mathcal{A}^\mathcal{I}}(\mathcal{M})\neq 0$ where
$\mathcal{K}=\{k\in\mathbb{N}: \mathcal{F}_{x_k\mu}(t)> 1-t\}$ and $\mathcal{M}=\{k\in\mathbb{N}: \mathcal{F}_{x_k\alpha}(t)> 1-t\}$.
As, $\mu\neq\alpha$, so $\mathcal{K}\cap \mathcal{M}=\emptyset$ and so $\mathcal{M}\subset \mathcal{K}^c$.
Since $\mathcal{I}\mbox{-}st_\mathcal{A}^{\mathcal{F}}\mbox{-}\lim\limits_{k\rightarrow \infty}x_k= \mu$ so $\delta_{\mathcal{A}^\mathcal{I}}(\mathcal{K}^c)=0$. Hence $\delta_{\mathcal{A}^\mathcal{I}}(\mathcal{M})=0$, which is a contradiction. 

Therefore, $\Gamma_x^\mathcal{A}(\mathcal{I})^\mathcal{F}_{s}=\{\mu\}$. 

As $\mathcal{I}\mbox{-}st_\mathcal{A}^{\mathcal{F}}\mbox{-}\lim\limits_{k\rightarrow \infty}x_k= \mu$, so from Theorem \ref{thm35}, we have $\mu\in\Lambda_x^\mathcal{A}(\mathcal{I})^\mathcal{F}_{s}$. Now by Theorem \ref{thm41}, we get $\Lambda_x^\mathcal{A}(\mathcal{I})^\mathcal{F}_{s}=\Gamma_x^\mathcal{A}(\mathcal{I})^\mathcal{F}_{s}=\{\mu\}$.
\end{proof}

\begin{thm}\label{thm43}
Let $(X,\mathcal{F},\tau)$ be a PM space. Also let $x=\{x_k\}_{k\in\mathbb{N}}$ and $y=\{y_k\}_{k\in\mathbb{N}}$ be two sequences in $X$ such that $\delta_{\mathcal{A}^\mathcal{I}}(\{k\in \mathbb{N} : x_k \neq y_k\})=0$. Then $\Lambda_x^\mathcal{A}(\mathcal{I})^\mathcal{F}_{s}=\Lambda_y^\mathcal{A}(\mathcal{I})^\mathcal{F}_{s}$ and $\Gamma_x^\mathcal{A}(\mathcal{I})^\mathcal{F}_{s}= \Gamma_y^\mathcal{A}(\mathcal{I})^\mathcal{F}_{s}$.
\end{thm}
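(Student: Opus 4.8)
The plan is to prove both equalities by the two-inclusion method, exploiting that the hypothesis $\delta_{\mathcal{A}^\mathcal{I}}(\{k:x_k\neq y_k\})=0$ is symmetric in $x$ and $y$; so it suffices to show $\Lambda_x^\mathcal{A}(\mathcal{I})^\mathcal{F}_{s}\subseteq\Lambda_y^\mathcal{A}(\mathcal{I})^\mathcal{F}_{s}$ and $\Gamma_x^\mathcal{A}(\mathcal{I})^\mathcal{F}_{s}\subseteq\Gamma_y^\mathcal{A}(\mathcal{I})^\mathcal{F}_{s}$, the reverse inclusions following by interchanging $x$ and $y$. I would write $B=\{k\in\mathbb{N}:x_k\neq y_k\}$, so $\delta_{\mathcal{A}^\mathcal{I}}(B)=0$, and note that on $\mathbb{N}\setminus B$ the two sequences literally coincide. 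The whole argument rests on two facts already in the excerpt: $\mathcal{J}(\mathcal{A}^\mathcal{I})$ is an ideal, so any subset of a density-zero set is density-zero; and the union of two density-zero sets is density-zero (item (v) of the density properties).

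For the limit points, let $\zeta\in\Lambda_x^\mathcal{A}(\mathcal{I})^\mathcal{F}_{s}$, so there is an $\mathcal{A}^\mathcal{I}$-nonthin index set $\mathcal{K}$ with $\{x\}_\mathcal{K}\xrightarrow{\mathcal{F}}\zeta$. I would put $\mathcal{K}'=\mathcal{K}\setminus B$ and first check it is still nonthin: since $\mathcal{K}=\mathcal{K}'\cup(\mathcal{K}\cap B)$ with $\mathcal{K}\cap B\subseteq B$ density-zero, if $\mathcal{K}'$ had density zero then item (v) would force $\delta_{\mathcal{A}^\mathcal{I}}(\mathcal{K})=0$, contradicting nonthinness. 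Hence $\mathcal{K}'$ is nonthin, in particular infinite, and because $\mathcal{K}'\cap B=\emptyset$ we have $\{y\}_{\mathcal{K}'}=\{x\}_{\mathcal{K}'}$, which is a subsequence of $\{x\}_\mathcal{K}$ and therefore strongly converges to $\zeta$. Thus $\{y\}_{\mathcal{K}'}$ is an $\mathcal{A}^\mathcal{I}$-nonthin subsequence of $y$ converging to $\zeta$, giving $\zeta\in\Lambda_y^\mathcal{A}(\mathcal{I})^\mathcal{F}_{s}$.

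For the cluster points, let $\nu\in\Gamma_x^\mathcal{A}(\mathcal{I})^\mathcal{F}_{s}$ and fix $t>0$. Writing $A_x(t)=\{k\in\mathbb{N}:\mathcal{F}_{x_k\nu}(t)>1-t\}$ and similarly $A_y(t)$, the definition gives $\delta_{\mathcal{A}^\mathcal{I}}(A_x(t))\neq 0$. I would decompose $A_x(t)=(A_x(t)\setminus B)\cup(A_x(t)\cap B)$; the second piece lies in $B$, hence has density zero, so if $A_x(t)\setminus B$ also had density zero, item (v) would yield $\delta_{\mathcal{A}^\mathcal{I}}(A_x(t))=0$, a contradiction. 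Therefore $\delta_{\mathcal{A}^\mathcal{I}}(A_x(t)\setminus B)\neq 0$. Finally, $x_k=y_k$ off $B$ gives $A_x(t)\setminus B=A_y(t)\setminus B\subseteq A_y(t)$; since a set of nonzero density is contained in $A_y(t)$, the ideal property of $\mathcal{J}(\mathcal{A}^\mathcal{I})$ (a density-zero set cannot contain one of nonzero density) forces $\delta_{\mathcal{A}^\mathcal{I}}(A_y(t))\neq 0$. As $t>0$ was arbitrary, $\nu\in\Gamma_y^\mathcal{A}(\mathcal{I})^\mathcal{F}_{s}$.

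The delicate point to watch is that $\mathcal{A}^\mathcal{I}$-densities need not exist for arbitrary sets, so I must avoid any step that silently assumes existence; in particular the tempting shortcut ``$\delta_{\mathcal{A}^\mathcal{I}}(A_x(t)\setminus B)=\delta_{\mathcal{A}^\mathcal{I}}(A_x(t))$ because the difference has density zero'' is not licensed by item (ii), which only handles finite symmetric differences. The argument above sidesteps this entirely by phrasing every conclusion through the inequality ``$\neq 0$'' and invoking only the two robust operations that hold regardless of existence of the ambient densities: subsets of density-zero sets are density-zero, and the union of two density-zero sets is density-zero.
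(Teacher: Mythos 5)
Your proof is correct and follows essentially the same route as the paper: for cluster points you intersect $A_x(t)$ with the agreement set (the paper's $\mathcal{C}=\{k:x_k=y_k\}$, your $\mathbb{N}\setminus B$) and use that a union of two density-zero sets has density zero, and for limit points you restrict the nonthin index set to where the sequences agree, exactly as the paper does with its set $\mathcal{M}$. Your explicit caution about nonexistent densities and the phrasing through ``$\neq 0$'' is a slightly more careful write-up of the same argument, not a different one.
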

\begin{proof}
Let $\nu \in \Gamma_x^\mathcal{A}(\mathcal{I})^\mathcal{F}_{s}$ and $t>0$ be given. Let $\mathcal{C}=\{k\in\mathbb N: x_k= y_k\}$. Since $\delta_{\mathcal{A}^\mathcal{I}}(\mathcal{C})=1$, so $\delta_{\mathcal{A}^\mathcal{I}}(\{k\in\mathbb N:\mathcal{F}_{x_k\nu}(t)>1-t\}\cap \mathcal{C})$ is not zero. This gives $\delta_{\mathcal{A}^\mathcal{I}}(\{k\in\mathbb{N}:\mathcal{F}_{y_k\nu}(t)>1-t\})\neq 0$ and so $\nu \in \Gamma_y^\mathcal{A}(\mathcal{I})^\mathcal{F}_{s}$. Since $\nu \in \Gamma_x^\mathcal{A}(\mathcal{I})^\mathcal{F}_{s}$ is arbitrary, so $\Gamma_x^\mathcal{A}(\mathcal{I})^\mathcal{F}_{s}\subset \Gamma_y^\mathcal{A}(\mathcal{I})^\mathcal{F}_{s}$. By similar argument, we get $ \Gamma _x^\mathcal{A}(\mathcal{I})^\mathcal{F}_{s} \supset \Gamma_y^\mathcal{A}(\mathcal{I})^\mathcal{F}_{s}$. Hence, $\Gamma_x^\mathcal{A}(\mathcal{I})^\mathcal{F}_{s}= \Gamma_y^\mathcal{A}(\mathcal{I})^\mathcal{F}_{s}$.

Now let $\mu\in \Lambda_y^\mathcal{A}(\mathcal{I})^\mathcal{F}_{s}$. Then $y$ has an $\mathcal{A}^\mathcal{I}$-nonthin subsequence $\{y_{k_j}\}_{j\in \mathbb N}$ that strongly converges to $\mu$. Let $\mathcal{M}=\{k_j \in \mathbb{N} : y_{k_j} =  x_{k_j}\}$. Since $\delta_{\mathcal{A}^\mathcal{I}}(\{k_j\in \mathbb{N} : y_{k_j}\neq x_{k_j}\})=0$ and $\{y_{k_j}\}_{j\in \mathbb{N}}$ is an $\mathcal{A}^\mathcal{I}$-nonthin subsequence of $y$ so $\delta_{\mathcal{A}^\mathcal{I}}(\mathcal{M}) \neq 0$. Now using the set $\mathcal{M}$ we get an $\mathcal{A}^\mathcal{I}$-nonthin subsequence $\{x\}_{\mathcal{M}'}$ of $x$ that strongly converges to $\mu$. Thus $\mu \in \Lambda_x^\mathcal{A}(\mathcal{I})^\mathcal{F}_{s}$. As $\mu \in \Lambda_y^\mathcal{A}(\mathcal{I})^\mathcal{F}_{s}$ is arbitrary, so $\Lambda_y^\mathcal{A}(\mathcal{I})^\mathcal{F}_{s}\subset \Lambda_x^\mathcal{A}(\mathcal{I})^\mathcal{F}_{s}$. Similarly, we have $\Lambda_x^\mathcal{A}(\mathcal{I})^\mathcal{F}_{s}\subset \Lambda _y^\mathcal{A}(\mathcal{I})^\mathcal{F}_{s}$. Therefore $\Lambda_x^\mathcal{A}(\mathcal{I})^\mathcal{F}_{s}= \Lambda_y^\mathcal{A}(\mathcal{I})^\mathcal{F}_{s}$.
\end{proof}

\begin{thm}\label{thm44}
Let $(X,\mathcal{F},\tau)$ be a PM space and $x=\{x_k\}_{k\in\mathbb{N}}$ be a sequence in $X$. Then the set $\Gamma_x^\mathcal{A}(\mathcal{I})^\mathcal{F}_{s}$ is a strongly closed set.
\end{thm}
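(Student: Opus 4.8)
The plan is to show that the complement $X\setminus\Gamma_x^\mathcal{A}(\mathcal{I})^\mathcal{F}_{s}$ is strongly open, which by Definition \ref{def27} is precisely the meaning of strong closedness. Accordingly, I would fix an arbitrary point $\eta\in X\setminus\Gamma_x^\mathcal{A}(\mathcal{I})^\mathcal{F}_{s}$ and produce a basic strong neighborhood of $\eta$ lying entirely inside the complement.

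Since $\eta$ is not a strong $\mathcal{A}^\mathcal{I}$-statistical cluster point, Definition \ref{def45} gives some $t>0$ for which $\delta_{\mathcal{A}^\mathcal{I}}(\{k\in\mathbb{N}:\mathcal{F}_{x_k\eta}(t)>1-t\})=0$. For this $t$, the continuity of $\tau$ together with Theorem \ref{thm23} furnishes an $\alpha>0$ such that $\mathcal{F}_{pr}(t)>1-t$ whenever $\mathcal{F}_{pq}(\alpha)>1-\alpha$ and $\mathcal{F}_{qr}(\alpha)>1-\alpha$. I claim that $\mathcal{N}_\eta(\alpha)\subset X\setminus\Gamma_x^\mathcal{A}(\mathcal{I})^\mathcal{F}_{s}$, which would immediately exhibit $\eta$ as an interior point of the complement.

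To establish the claim, take any $\mu\in\mathcal{N}_\eta(\alpha)$, so that $\mathcal{F}_{\mu\eta}(\alpha)=\mathcal{F}_{\eta\mu}(\alpha)>1-\alpha$. For every index $k$ satisfying $\mathcal{F}_{x_k\mu}(\alpha)>1-\alpha$, the choice of $\alpha$ forces $\mathcal{F}_{x_k\eta}(t)>1-t$; hence $\{k\in\mathbb{N}:\mathcal{F}_{x_k\mu}(\alpha)>1-\alpha\}\subset\{k\in\mathbb{N}:\mathcal{F}_{x_k\eta}(t)>1-t\}$. Because $\mathcal{A}$ is non-negative, this inclusion gives $0\le(\mathcal{A}\chi_{\{k:\mathcal{F}_{x_k\mu}(\alpha)>1-\alpha\}})_n\le(\mathcal{A}\chi_{\{k:\mathcal{F}_{x_k\eta}(t)>1-t\}})_n$ for each $n$, whence the monotonicity of $\mathcal{I}$-limits yields $\delta_{\mathcal{A}^\mathcal{I}}(\{k:\mathcal{F}_{x_k\mu}(\alpha)>1-\alpha\})=0$. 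Thus $\mu$ violates the cluster-point condition at level $\alpha$, so $\mu\notin\Gamma_x^\mathcal{A}(\mathcal{I})^\mathcal{F}_{s}$. This proves $\mathcal{N}_\eta(\alpha)\subset X\setminus\Gamma_x^\mathcal{A}(\mathcal{I})^\mathcal{F}_{s}$, so the complement is strongly open and $\Gamma_x^\mathcal{A}(\mathcal{I})^\mathcal{F}_{s}$ is strongly closed.

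The step demanding the most care is the density comparison at the end. Since $\delta_{\mathcal{A}^\mathcal{I}}$ is defined as an $\mathcal{I}$-limit that need not exist in general, I cannot simply appeal to additivity from the listed density properties. Instead I would invoke the squeeze principle for $\mathcal{I}$-convergence, namely that $0\le a_n\le b_n$ with $\mathcal{I}\mbox{-}\lim b_n=0$ implies $\mathcal{I}\mbox{-}\lim a_n=0$, applied to the two sequences of matrix row-sums displayed above. Everything else is a routine transfer of the classical cluster-point argument to the strong topology by means of the vicinity inequality of Theorem \ref{thm23}.
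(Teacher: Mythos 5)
Your proof is correct, but it takes a different route from the paper. The paper disposes of this theorem in one line: it invokes Theorem 4.2 of \c{S}en\c{c}imen and Pehlivan \cite{Se2} (strong $\mathcal{I}$-cluster points of a sequence in a PM space form a strongly closed set, for any admissible ideal) and specializes it to the ideal $\mathcal{J}(\mathcal{A}^\mathcal{I})$, which the paper has already noted is an admissible ideal in $\mathbb{N}$; under this specialization, strong $\mathcal{J}(\mathcal{A}^\mathcal{I})$-cluster points are exactly the points of $\Gamma_x^\mathcal{A}(\mathcal{I})^\mathcal{F}_{s}$. You instead give a self-contained topological argument: complement is open via Theorem \ref{thm23} plus a density comparison. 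Two remarks on the comparison. First, the step you flag as delicate --- passing from $\delta_{\mathcal{A}^\mathcal{I}}(A)=0$ and $B\subset A$ to $\delta_{\mathcal{A}^\mathcal{I}}(B)=0$ when densities are $\mathcal{I}$-limits that need not exist --- is precisely the heredity (subset-closure) property of the ideal $\mathcal{J}(\mathcal{A}^\mathcal{I})$; your squeeze argument ($0\le(\mathcal{A}\chi_B)_n\le(\mathcal{A}\chi_A)_n$ together with the fact that $\{n:(\mathcal{A}\chi_B)_n\ge\varepsilon\}\subset\{n:(\mathcal{A}\chi_A)_n\ge\varepsilon\}\in\mathcal{I}$) is a correct proof of it, and in the paper's framework this is exactly what makes $\mathcal{J}(\mathcal{A}^\mathcal{I})$ an ideal, so the transfer argument gets it for free. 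Second, what each approach buys: the paper's citation is shorter and emphasizes that the result is purely ideal-theoretic, while your proof makes the actual mechanism visible (symmetry (P-3), the vicinity inequality from continuity of $\tau$, non-negativity and boundedness of the rows of $\mathcal{A}$) and spares the reader the translation between ``$\{k:\mathcal{F}_{x_k\eta}(t)>1-t\}\notin\mathcal{J}(\mathcal{A}^\mathcal{I})$'' and ``the density does not equal zero,'' which is the one place where the cluster-point definition (density possibly nonexistent) has to be handled with care. Both are valid; yours would serve as a full proof of the cited result in this special case.
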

\begin{proof}
The proof directly follows from [Theorem 4.2 \cite{Se2}], by taking the ideal
$ \mathcal{J}(\mathcal{A}^\mathcal{I})$.
\end{proof}

\begin{thm}\label{thm45}
Let $(X,\mathcal{F},\tau)$ be a PM space, $x=\{x_k\}_{k\in\mathbb{N}}$ be a sequence in $X$ and $\mathcal{C}$ be a strongly compact subset of $X$ such that $\mathcal{C}\cap\Gamma_x^\mathcal{A}(\mathcal{I})^\mathcal{F}_{s}=\emptyset$. Then $\delta_{\mathcal{A}^\mathcal{I}}(\mathcal{M})=0$, where $\mathcal{M}=\{k\in\mathbb{N}:x_k\in \mathcal{C}\}$.
\end{thm}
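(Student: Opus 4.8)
The plan is to use the strong compactness of $\mathcal{C}$ to extract a finite subcover from the open cover supplied by the hypothesis $\mathcal{C}\cap\Gamma_x^\mathcal{A}(\mathcal{I})^\mathcal{F}_{s}=\emptyset$, and then to combine finite subadditivity of $\mathcal{A}^\mathcal{I}$-density-zero sets with the monotonicity of $\delta_{\mathcal{A}^\mathcal{I}}$.

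First I would fix $\nu\in\mathcal{C}$. Since $\mathcal{C}$ contains no strong $\mathcal{A}^\mathcal{I}$-statistical cluster point of $x$, the point $\nu$ is not such a cluster point, so by Definition \ref{def45} there exists $t_\nu>0$ with
$$\delta_{\mathcal{A}^\mathcal{I}}(\{k\in\mathbb{N}:\mathcal{F}_{x_k\nu}(t_\nu)>1-t_\nu\})=0,$$
that is, $\delta_{\mathcal{A}^\mathcal{I}}(\{k\in\mathbb{N}:x_k\in\mathcal{N}_\nu(t_\nu)\})=0$. The family $\{\mathcal{N}_\nu(t_\nu):\nu\in\mathcal{C}\}$ is a strong open cover of $\mathcal{C}$, so by strong compactness there are finitely many points $\nu_1,\dots,\nu_n\in\mathcal{C}$ with $\mathcal{C}\subset\bigcup_{i=1}^{n}\mathcal{N}_{\nu_i}(t_{\nu_i})$.

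Next I would transfer this covering of $\mathcal{C}$ to the index set $\mathcal{M}$. Setting $\mathcal{M}_i=\{k\in\mathbb{N}:x_k\in\mathcal{N}_{\nu_i}(t_{\nu_i})\}$, each $\mathcal{M}_i$ has $\mathcal{A}^\mathcal{I}$-density zero by construction, and if $x_k\in\mathcal{C}$ then $x_k$ lies in one of these finitely many neighborhoods, whence $\mathcal{M}\subset\bigcup_{i=1}^{n}\mathcal{M}_i$. Applying property (v) of the listed density properties inductively, the finite union $\mathcal{M}':=\bigcup_{i=1}^{n}\mathcal{M}_i$ satisfies $\delta_{\mathcal{A}^\mathcal{I}}(\mathcal{M}')=0$.

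Finally I would deduce $\delta_{\mathcal{A}^\mathcal{I}}(\mathcal{M})=0$ from $\mathcal{M}\subset\mathcal{M}'$ by squeezing the defining $\mathcal{I}$-limit: since $a_{nk}\geq0$ we have $0\leq(\mathcal{A}\chi_{\mathcal{M}})_n\leq(\mathcal{A}\chi_{\mathcal{M}'})_n$ for every $n$, so for each $\varepsilon>0$ the set $\{n\in\mathbb{N}:(\mathcal{A}\chi_{\mathcal{M}})_n\geq\varepsilon\}$ is contained in $\{n\in\mathbb{N}:(\mathcal{A}\chi_{\mathcal{M}'})_n\geq\varepsilon\}\in\mathcal{I}$ and hence itself lies in $\mathcal{I}$; thus $\mathcal{I}\mbox{-}\lim_{n\to\infty}(\mathcal{A}\chi_{\mathcal{M}})_n=0$. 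The step requiring the most care is this last monotonicity argument, since the stated additivity properties presuppose that the densities involved exist; the nonnegativity of $\mathcal{A}$ is exactly what guarantees both that $\delta_{\mathcal{A}^\mathcal{I}}(\mathcal{M})$ exists and that it equals zero.
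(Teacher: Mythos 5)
Your proposal is correct and follows essentially the same route as the paper: cover $\mathcal{C}$ by strong neighborhoods whose index sets have $\mathcal{A}^\mathcal{I}$-density zero, extract a finite subcover by strong compactness, and conclude that $\mathcal{M}=\{k\in\mathbb{N}:x_k\in\mathcal{C}\}$ has density zero. The only difference is in the last step, where your argument (finite subadditivity via property (v) plus monotonicity and the hereditary property of the ideal) is in fact more careful than the paper's, which passes from the pointwise inequality $\sum_{x_k\in\mathcal{C}}a_{nk}\leq\sum_{j=1}^{m}\sum_{x_k\in\mathcal{N}_{\beta_j}(t_j)}a_{nk}$ to an inequality of $\mathcal{I}$-limits without remarking that the existence of the left-hand limit is itself part of what must be established.
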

\begin{proof}
As $\mathcal{C}\cap\Gamma_x^\mathcal{A}(\mathcal{I})^\mathcal{F}_{s}=\emptyset$, so for all $\beta\in \mathcal{C}$, there exists a real number $t=t(\beta)>0$ so that $\delta_{\mathcal{A}^\mathcal{I}}(\{k\in\mathbb{N}:\mathcal{F}_{x_k\beta}(t)>1-t\})=0$. Let $\mathcal{N}_{\beta}(t)=\{q\in X:\mathcal{F}_{q\beta}(t)>1-t\}$. Then the family of strongly open sets $\mathcal{Q}=\{\mathcal{N}_{\beta}(t):\beta\in \mathcal{C}\}$ forms a strong open cover of $\mathcal{C}$. As $\mathcal{C}$ is a strongly compact set, so there exists a finite subcover $\{\mathcal{N}_{\beta_1}(t_1),\mathcal{N}_{\beta_2}(t_2),...,\mathcal{N}_{\beta_m}(t_m)\}$ of the strong open cover $\mathcal{Q}$. Then $\mathcal{C}\subset\bigcup\limits_{j=1}^m\mathcal{N}_{\beta_j}(t_j)$ and also for each $j=1,2,...,m$ we have $\delta_{\mathcal{A}^\mathcal{I}}(\{k\in\mathbb{N}:\mathcal{F}_{x_k\beta_j}(t_j)>1-t_j\})=0$. So we get for every $n\in\mathbb{N}$, 
$$\sum\limits_{x_k\in \mathcal{C}}a_{nk}\leq\sum\limits_{j=1}^m\sum\limits_{x_k\in \mathcal{N}_{\beta_j}(t_j)}a_{nk}.$$ Then by the property of $\mathcal{I}$ convergence,\\
\begin{eqnarray*}
&&~~~\mathcal{I}\mbox{-}\lim\limits_{n\rightarrow\infty}\sum\limits_{x_k\in \mathcal{C}}a_{nk}\leq\sum\limits_{j=1}^m\mathcal{I}\mbox{-}\lim\limits_{n\rightarrow\infty}\sum\limits_{x_k\in \mathcal{N}_{\beta_j}(t_j)}a_{nk}=0.\\
\end{eqnarray*}
This gives $\delta_{\mathcal{A}^\mathcal{I}}(\{k\in\mathbb{N}:x_k\in \mathcal{C}\})=0$.
\end{proof}

\begin{thm}\label{thm46}
Let $(X,\mathcal{F},\tau)$ be a PM space and $x=\{x_k\}_{k\in\mathbb{N}}$ be a sequence in $X$. If $x$ has a strongly bounded $\mathcal{A}^\mathcal{I}$-nonthin subsequence then the set $\Gamma_x^\mathcal{A}(\mathcal{I})^\mathcal{F}_{s}$ is non-empty and strongly closed.
\end{thm}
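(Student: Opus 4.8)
The plan is to dispatch the two assertions separately, since the strongly closed part requires essentially no new work. First I would observe that Theorem \ref{thm44} already guarantees $\Gamma_x^\mathcal{A}(\mathcal{I})^\mathcal{F}_{s}$ is strongly closed for an \emph{arbitrary} sequence in a PM space; no use of the bounded nonthin subsequence is needed for that half, so I would simply cite Theorem \ref{thm44}. The entire content of the statement therefore lies in showing that the hypothesis forces $\Gamma_x^\mathcal{A}(\mathcal{I})^\mathcal{F}_{s}$ to be non-empty.

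For non-emptiness I would argue by contradiction, leaning on Theorem \ref{thm45}. Let $\{x\}_\mathcal{K}$ with $\mathcal{K}=\{k_1<k_2<\cdots\}$ be the given strongly bounded $\mathcal{A}^\mathcal{I}$-nonthin subsequence, so that $\delta_{\mathcal{A}^\mathcal{I}}(\mathcal{K})\neq 0$, and let $\mathcal{C}$ be a strongly compact subset of $X$ with $x_{k_j}\in\mathcal{C}$ for every $j\in\mathbb{N}$ (this is exactly what strong boundedness of the subsequence provides). Suppose, to the contrary, that $\Gamma_x^\mathcal{A}(\mathcal{I})^\mathcal{F}_{s}=\emptyset$. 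Then in particular $\mathcal{C}\cap\Gamma_x^\mathcal{A}(\mathcal{I})^\mathcal{F}_{s}=\emptyset$, so Theorem \ref{thm45} applies to the strongly compact set $\mathcal{C}$ and yields $\delta_{\mathcal{A}^\mathcal{I}}(\{k\in\mathbb{N}:x_k\in\mathcal{C}\})=0$.

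The decisive step is then to transfer this density-zero conclusion down to $\mathcal{K}$ itself. Since every term $x_{k_j}$ lies in $\mathcal{C}$, we have the inclusion $\mathcal{K}\subset\{k\in\mathbb{N}:x_k\in\mathcal{C}\}$; because $\mathcal{J}(\mathcal{A}^\mathcal{I})$ is an admissible ideal (equivalently, every subset of an $\mathcal{A}^\mathcal{I}$-density zero set again has $\mathcal{A}^\mathcal{I}$-density zero), it follows that $\delta_{\mathcal{A}^\mathcal{I}}(\mathcal{K})=0$. This contradicts the nonthinness of $\{x\}_\mathcal{K}$, and hence $\Gamma_x^\mathcal{A}(\mathcal{I})^\mathcal{F}_{s}\neq\emptyset$. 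Combined with the first paragraph, this proves the theorem.

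The one place that needs care---and what I regard as the main (though modest) obstacle---is justifying that $\mathcal{K}\subset\{k:x_k\in\mathcal{C}\}$ together with $\delta_{\mathcal{A}^\mathcal{I}}(\{k:x_k\in\mathcal{C}\})=0$ genuinely forces $\delta_{\mathcal{A}^\mathcal{I}}(\mathcal{K})=0$, even though one does not know a priori that the $\mathcal{A}^\mathcal{I}$-density of $\mathcal{K}$ exists (nonthinness permits nonexistence). This is precisely the downward-closedness of $\mathcal{J}(\mathcal{A}^\mathcal{I})$: from $0\le\sum_{m\in\mathcal{K}}a_{nm}\le\sum_{m\in\{k:x_k\in\mathcal{C}\}}a_{nm}$ and the fact that the right-hand side is $\mathcal{I}$-convergent to $0$, the level-set comparison $\{n:\sum_{m\in\mathcal{K}}a_{nm}\ge\varepsilon\}\subset\{n:\sum_{m\in\{k:x_k\in\mathcal{C}\}}a_{nm}\ge\varepsilon\}\in\mathcal{I}$ shows the left-hand side is $\mathcal{I}$-convergent to $0$ as well, so $\delta_{\mathcal{A}^\mathcal{I}}(\mathcal{K})$ exists and equals zero. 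Once this is in place the contradiction is immediate.
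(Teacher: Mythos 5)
Your proof is correct and follows essentially the same route as the paper: closedness is delegated to Theorem \ref{thm44}, and non-emptiness is obtained by contradiction via Theorem \ref{thm45} applied to the compact set containing the nonthin subsequence, followed by the termwise comparison $\sum_{m\in\mathcal{K}}a_{nm}\leq\sum_{x_k\in\mathcal{C}}a_{nk}$ to force $\delta_{\mathcal{A}^\mathcal{I}}(\mathcal{K})=0$. Your explicit level-set justification of the downward-closedness step (that a subset of an $\mathcal{A}^\mathcal{I}$-density-zero set has $\mathcal{A}^\mathcal{I}$-density zero even when the subset's density is not assumed to exist) is in fact tidier than the paper's phrasing, which invokes an unnecessary threshold $N_0$ for an inequality that holds for every $n$ by non-negativity of $\mathcal{A}$.
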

\begin{proof}
Let $\{x\}_\mathcal{M}$ be a strongly bounded $\mathcal{A}^\mathcal{I}$-nonthin subsequence of $x$. So $\delta_{\mathcal{A}^\mathcal{I}}(\mathcal{M})\neq 0$ and there exists a strongly compact subset $\mathcal{C}$ of $X$ such that $x_k\in \mathcal{C}$ for all $k\in \mathcal{M}$. If $\Gamma_x^\mathcal{A}(\mathcal{I})^\mathcal{F}_{s}=\emptyset$ then $\mathcal{C}\cap\Gamma_x^\mathcal{A}(\mathcal{I})^\mathcal{F}_{s}=\emptyset$ and then by Theorem \ref{thm45}, we get $\delta_{\mathcal{A}^\mathcal{I}}(\{k\in\mathbb{N}:x_k\in \mathcal{C}\})=0$. Since $\mathcal{A}$ is a non-negative regular summability matrix so there exists an $N_0\in\mathbb{N}$ such that for every $n\geq N_0$ we have
$$\sum\limits_{k\in\mathcal{M}}a_{nk}\leq \sum\limits_{x_k\in \mathcal{C}}a_{nk}$$ and this gives $\delta_{\mathcal{A}^\mathcal{I}}(\mathcal{M})=0$, which contradicts our assumption. Hence $\Gamma_x^\mathcal{A}(\mathcal{I})^\mathcal{F}_{s}$ is nonempty and also by Theorem \ref{thm44}, $\Gamma_x^\mathcal{A}(\mathcal{I})^\mathcal{F}_{s}$ is strongly closed.
\end{proof}

\begin{defn}\label{def46}
Let $(X,\mathcal{F},\tau)$ be a PM space, $x=\{x_k\}_{k\in\mathbb{N}}$ be a sequence in $X$. Then $x$ is said to be strongly $\mathcal{A}^\mathcal{I}$-statistically bounded if there exists a strongly compact subset $\mathcal{C}$ of $X$ such that $\delta_{\mathcal{A}^\mathcal{I}}(\{k\in\mathbb{N}:x_k\notin \mathcal{C}\})=0$. 
\end{defn}

\begin{thm}\label{thm47}
Let $(X,\mathcal{F},\tau)$ be a PM space, $x=\{x_k\}_{k\in\mathbb{N}}$ be a sequence in $X$. If $x$ is strongly $\mathcal{A}^\mathcal{I}$-statistically bounded then the set $\Gamma_x^\mathcal{A}(\mathcal{I})^\mathcal{F}_{s}$ is nonempty and strongly compact.
\end{thm}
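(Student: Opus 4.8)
The plan is to exploit the strongly compact set furnished by the boundedness hypothesis both to locate the cluster points and to confine them. By Definition \ref{def46}, there is a strongly compact subset $\mathcal{C}$ of $X$ with $\delta_{\mathcal{A}^\mathcal{I}}(\{k\in\mathbb{N}:x_k\notin \mathcal{C}\})=0$. Using property (iv) of the $\mathcal{A}^\mathcal{I}$-density, the complementary set $\mathcal{M}=\{k\in\mathbb{N}:x_k\in \mathcal{C}\}$ then satisfies $\delta_{\mathcal{A}^\mathcal{I}}(\mathcal{M})=1$, so $\{x\}_\mathcal{M}$ is a strongly bounded $\mathcal{A}^\mathcal{I}$-nonthin subsequence of $x$. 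Hence Theorem \ref{thm46} immediately yields that $\Gamma_x^\mathcal{A}(\mathcal{I})^\mathcal{F}_{s}$ is nonempty and strongly closed.

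It remains to prove compactness, and the key step is to establish the inclusion $\Gamma_x^\mathcal{A}(\mathcal{I})^\mathcal{F}_{s}\subset \mathcal{C}$. Since $\mathcal{C}$ is strongly compact in the Hausdorff strong topology, it is strongly closed, so its complement is strongly open. I would argue by contradiction: suppose some $\nu\in\Gamma_x^\mathcal{A}(\mathcal{I})^\mathcal{F}_{s}$ lies outside $\mathcal{C}$. Then there is a $t_0>0$ with $\mathcal{N}_\nu(t_0)\cap \mathcal{C}=\emptyset$, whence $\{k\in\mathbb{N}:\mathcal{F}_{x_k\nu}(t_0)>1-t_0\}=\{k\in\mathbb{N}:x_k\in\mathcal{N}_\nu(t_0)\}\subset\{k\in\mathbb{N}:x_k\notin \mathcal{C}\}$. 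As the larger set has $\mathcal{A}^\mathcal{I}$-density zero and $\mathcal{J}(\mathcal{A}^\mathcal{I})$ is an ideal, the smaller set also has $\mathcal{A}^\mathcal{I}$-density zero, contradicting the defining property of $\nu$ as a strong $\mathcal{A}^\mathcal{I}$-statistical cluster point in Definition \ref{def45}. This forces $\nu\in \mathcal{C}$, establishing the inclusion.

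Finally, $\Gamma_x^\mathcal{A}(\mathcal{I})^\mathcal{F}_{s}$ is a strongly closed subset (by the first paragraph, or equivalently by Theorem \ref{thm44}) of the strongly compact set $\mathcal{C}$, so Theorem \ref{thm22} gives that $\Gamma_x^\mathcal{A}(\mathcal{I})^\mathcal{F}_{s}$ is strongly compact. The main obstacle is the containment argument of the second paragraph: one must select the neighborhood radius $t_0$ so that $\mathcal{N}_\nu(t_0)$ misses $\mathcal{C}$, and then transfer the density-zero conclusion along the set inclusion. This is precisely where the Hausdorff (hence closed) nature of the compact set $\mathcal{C}$ and the ideal structure of $\mathcal{J}(\mathcal{A}^\mathcal{I})$ are both essential.
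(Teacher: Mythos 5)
Your proposal is correct and follows essentially the same route as the paper's proof: obtain nonemptiness and strong closedness from Theorem \ref{thm46} via the $\mathcal{A}^\mathcal{I}$-nonthin subsequence lying in $\mathcal{C}$, prove the inclusion $\Gamma_x^\mathcal{A}(\mathcal{I})^\mathcal{F}_{s}\subset \mathcal{C}$ by contradiction using a strong neighborhood $\mathcal{N}_\nu(t_0)$ disjoint from $\mathcal{C}$, and conclude compactness since a strongly closed subset of a strongly compact set is strongly compact (Theorem \ref{thm22}). Your explicit appeals to the Hausdorff property, to the ideal structure of $\mathcal{J}(\mathcal{A}^\mathcal{I})$ for transferring density zero along the inclusion, and to Theorem \ref{thm22} merely make precise what the paper leaves implicit.
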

\begin{proof}
Let $\mathcal{C}$ be a strongly compact set with $\delta_{\mathcal{A}^\mathcal{I}}(\mathcal{V})=0$, where $\mathcal{V}=\{k\in\mathbb{N}:x_k\notin \mathcal{C}\}$. Then $\delta_{\mathcal{A}^\mathcal{I}}(\mathcal{V}^c)=1\neq 0$ and so $\mathcal{C}$ contains a bounded $\mathcal{A}^\mathcal{I}$- nonthin subsequence of $x$. So, by Theorem \ref{thm46}, $\Gamma_x^\mathcal{A}(\mathcal{I})^\mathcal{F}_{s}$ is nonempty and strongly closed. We now prove that $\Gamma_x^\mathcal{A}(\mathcal{I})^\mathcal{F}_{s}$ is strongly compact. For this we only show that $\Gamma_x^\mathcal{A}(\mathcal{I})^\mathcal{F}_{s}\subset C$. If possible let $\alpha\in \Gamma_x^\mathcal{A}(\mathcal{I})^\mathcal{F}_{s}\setminus \mathcal{C}$. As $\mathcal{C}$ is strongly compact so there is a $q>0$ such that $\mathcal{N}_\alpha(q)\cap \mathcal{C}=\emptyset$. So we get $\{k\in\mathbb{N}:\mathcal{F}_{x_k\alpha}(q)>1-q\}\subset\{k\in\mathbb{N}:x_k\notin \mathcal{C}\}$ which implies that $\delta_{\mathcal{A}^\mathcal{I}}(\{k\in\mathbb{N}:\mathcal{F}_{x_k\alpha}(q)>1-q\})=0$, a contradiction to our assumption that $\alpha\in\Gamma_x^\mathcal{A}(\mathcal{I})^\mathcal{F}_{s}$. So, $\Gamma_x^\mathcal{A}(\mathcal{I})^\mathcal{F}_{s}\subset \mathcal{C}$.

Therefore the set $\Gamma_x^\mathcal{A}(\mathcal{I})^\mathcal{F}_{s}$ is nonempty and strongly compact.
\end{proof}

\noindent\textbf{Acknowledgment:} The second author is grateful to
Council of Scientific and Industrial Research, India for his
fellowship funding under CSIR-JRF (SRF) scheme during the preparation of this paper.
\\

\end{document}